\definecolor{linkred}{rgb}{0.7,0.2,0.2}
\definecolor{linkblue}{rgb}{0,0.2,0.6}
\numberwithin{figure}{section}
\newcommand{\OO}{\mathcal{O}}
\newcommand{\sR}{\mathcal{R}}
\newcommand{\sU}{\mathcal{U}}
\newcommand{\Zar}{\mathrm{Zar}}
\newcommand{\Frac}{\mathrm{Frac}} 
\newcommand{\Aff}{\mathrm{Aff}}
\newcommand{\Sch}{\mathrm{Sch}}
\newcommand{\perf}{\mathrm{perf}}
\newcommand{\Shv}{\mathrm{Shv}}
\newcommand{\Set}{\mathrm{Set}}
\newcommand{\Spec}{\mathrm{Spec}}
\newcommand{\red}{\mathrm{red}}
\theoremstyle{theorem}
\newtheorem{theo}{Theorem}[section]
\newtheorem{coro}[theo]{Corollary}
\newtheorem{lemm}[theo]{Lemma}
\newtheorem{prop}[theo]{Proposition}
\theoremstyle{definition}
\newtheorem{defi}[theo]{Definition}
\newtheorem{obse}[theo]{Observation}
\newtheorem{rema}[theo]{Remark}
\newtheorem{exam}[theo]{Example}
\newtheorem{ques}[theo]{Question}
\DeclareSymbolFontAlphabet{\scr}{rsfs}
\newcommand{\FF}{\mathbb{F}}
\newcommand{\NN}{\mathbb{N}}
\newcommand{\PP}{\mathbb{P}}
\newcommand{\ZZ}{\mathbb{Z}}
\renewcommand{\AA}{\mathbb{A}}
\newcommand{\m}{\mathfrak{m}}
\newcommand{\n}{\mathfrak{n}}
\newcommand{\p}{\mathfrak{p}}
\DeclareMathOperator{\colim}{colim}
\DeclareMathOperator{\chain}{chain}
\title{Non-reduced valuation rings and descent for smooth blowup squares}
\author{Shane Kelly}
\date{\today}
\begin{document}

\begin{abstract}
We consider a class of non-reduced valuation rings, known in the literature as \emph{chain rings}. We observe that the Grothendieck topology generated by the Zariski topology and smooth blowup squares is exactly the topology which has chain rings for its local rings, and that sheaves for this topology are \emph{not} characterised by excision for smooth blowup squares.
\end{abstract}

\maketitle

This is collated from notes from early 2020. The motivation was to find a cdh topology which can see nilpotents. The idea was the following. The cdh topos is the classifying topos for hensel valuation rings. 
A finitary topology on affine schemes of finite presentation can see nilpotents if and only if its category of sheaves is the classifying topos for a class of rings which are not all reduced, Obs.\ref{obse:red}. So one should look for a nice class of rings related to valuation rings but which are not reduced. Chain rings are such a class, but the associated topology was abandoned because its sheaves are not characterised by excision for smooth blowup squares, Prop.\ref{prop:intro3}.

Upon a suggestion of Shuji Saito, this project turned into the procdh topology.

\section{Summary}

We summarise the main points here.

\begin{defi}
A \emph{chain ring} is a ring whose poset of ideals is totally ordered. Let $S$ be a qcqs scheme and $\Sch_S$ is the category of $S$-schemes of finite presentation. The \emph{chain topology} on $\Sch_S$ has as coverings those families $\{Y_i \to X\}_{i \in I}$ such that 
 \[ \amalg \hom(\Spec(R), Y_i) \to \hom(\Spec(R), X) \]
is surjective for every chain ring $R$.
\end{defi}

Every valuation ring is a chain ring and every reduced chain ring is a valuation ring, Lem.\ref{lemm:redChainVal}. Every localisation and quotient of a chain ring is a chain ring. I don't know any examples of chain rings which are not quotients of valuation rings. There are procdh local rings which are not chain rings\footnote{E.g., $k[x,y]/\langle x^2, y^2 \rangle$.} and chain rings which are not procdh local rings.\footnote{E.g., $R / \langle xy \rangle$ where $R$ is any valuation ring with three primes $\m \supseteq \p \supseteq (0)$, $x \in \m \setminus \p$ and $y \in \p \setminus (0)$. %
} 

\begin{prop}[{Proposition~\ref{prop:equivChain}}]
Let $\sU = \{Y_i \to X\}_{i \in I}$ be a family of morphisms in $\Sch_\ZZ$. The following are equivalent. 
\begin{enumerate}
 \item $\sU$ is a covering for the chain topology.

 \item $\sU$ is refinable by a composition of pullbacks of families of the form:
 \begin{enumerate}
  \item Zariski coverings,
  \item the family 
  \[ \{\{0\} \to \AA^2, Bl_{\AA^2} \{0\} \to \AA^2 \}. \] 
  \end{enumerate}
\end{enumerate}
In other words, the chain topology is generated by the Zariski topology, and blowup of the affine plane in the origin.
\end{prop}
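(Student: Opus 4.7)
My plan is to prove the equivalence by establishing the two directions separately, with $(1) \Rightarrow (2)$ being where the real content lies.

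For $(2) \Rightarrow (1)$, since the chain topology is a Grothendieck topology it suffices to check that each of the two generating families is itself a chain covering. Zariski coverings are chain coverings because every chain ring is local. For the blowup family $\{\{0\} \to \AA^2, Bl_{\AA^2}\{0\} \to \AA^2\}$, given a chain ring $R$ and a morphism $\Spec(R) \to \AA^2$ classified by $(a,b) \in R^2$, I would use that the totally ordered ideal poset of $R$ forces either $a \mid b$ or $b \mid a$. If $a = b = 0$ the map factors through the origin; otherwise, writing $b = ac$ say, the map lifts to the affine chart of $Bl_{\AA^2}\{0\}$ with structure map $(x, v) \mapsto (x, xv)$ via $(x, v) = (a, c)$, and the other case is symmetric.

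For $(1) \Rightarrow (2)$, let $\tau$ denote the topology generated by the two families in (2). The first direction shows $\tau$ is contained in the chain topology, and the strategy for the reverse inclusion is to show that the local rings of the $\tau$-topos are chain rings. The key lemma is: every $\tau$-local ring $R$ is a chain ring. Since $R$ is local (as $\tau$ refines Zariski), it suffices to show that for any two elements $a, b \in R$ one has $a \mid b$ or $b \mid a$. Lifting $a, b$ to a $\tau$-neighborhood $U$ of the relevant point, we obtain $(a, b): U \to \AA^2$; pulling back the blowup family and refining within $\tau$, the $\tau$-point factors either through $U \times_{\AA^2} \{0\}$, in which case $a$ and $b$ both vanish in $R$, or through one of the two standard affine charts of $U \times_{\AA^2} Bl_{\AA^2}\{0\}$, where the chart structure supplies an element $v$ in the refined neighborhood with $av = b$ or $bv = a$ in $R$.

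Given the key lemma, $(1) \Rightarrow (2)$ follows by the principle that both topologies are detected by surjectivity on their respective local rings: a chain cover $\sU$ is surjective on all chain-ring-valued points by definition, hence on all $\tau$-local ring points by the lemma, hence is itself a $\tau$-cover. The main obstacle will be justifying this last implication rigorously. For the chain topology it is essentially tautological from the definition, but for $\tau$ it requires knowing that the topology has enough points in the topos-theoretic sense. For finitary topologies on small categories of finitely presented schemes this is standard; alternatively, one can bypass the general machinery and construct an explicit refinement by iteratively applying the generating families to $X$, using the key lemma as a local guide for where to blow up.
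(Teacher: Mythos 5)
Your proof is correct and follows essentially the same route as the paper's. The computational heart in both is identical: a morphism from the spectrum of a chain ring to $\AA^2$ factors through a standard chart of the blowup because one coordinate divides the other; and the converse direction relies on Proposition~\ref{prop:bijection} (Deligne's completeness theorem) to pass from ``surjective on $\tau$-local ring points'' to ``$\tau$-covering.'' The one presentational difference is that the paper compresses the implication $(1)\Rightarrow(2)$ into the unproved Observation about generators of the chain topology (which itself rests on the coherent-axiom/covering-family correspondence in the proof of Proposition~\ref{prop:bijection}), whereas you spell that step out directly as the key lemma that $\tau$-local rings are chain rings — this is the same reasoning made explicit, not a genuinely different argument.
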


\begin{prop}[{Corollary~\ref{coro:autoRH}}] \label{prop:intro3}
There exist chain sheaves $F$ such that
\begin{equation} \label{equa:A2Bl}
\xymatrix{
F(\AA^2) \ar[r] \ar[d] & F(Bl_{\AA^2}\{0\}) \ar[d] \\
F(\{0\}) \ar[r] & F(\PP^1)	
}
\end{equation}
is not a cartesian square. 

In fact, if $τ \leq \Zar$ is a finitary topology on $\Sch_\ZZ$ such that for all sheaves $F$ the square Eq.\eqref{equa:A2Bl} is cartesian, then $τ \leq rh$ and so $F(X) = F(X_{\red})$ for all sheaves $F$ and $X \in \Sch_\ZZ$.
\end{prop}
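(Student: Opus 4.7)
By the Yoneda lemma, the hypothesis that \eqref{equa:A2Bl} be cartesian for every $\tau$-sheaf $F$ is equivalent to the pushout identity
\[
h_{\AA^2} \;=\; h_{\{0\}} \sqcup_{h_{\PP^1}} h_{Bl_{\AA^2}\{0\}}
\]
in the topos of $\tau$-sheaves, where $h_X$ denotes the $\tau$-sheafified representable. This identity breaks into two requirements: first, that $\{\{0\}, Bl_{\AA^2}\{0\}\} \to \AA^2$ be a $\tau$-covering; second, that the topos-theoretic kernel pair of this cover agree with the equivalence relation generated by $\PP^1 = \{0\} \times_{\AA^2} Bl_{\AA^2}\{0\}$. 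The first condition alone places $\tau$ above the chain topology by Prop.~\ref{prop:equivChain}, and by the first assertion of the Proposition this is not yet enough to force cartesianness. The real content is the second requirement, which constrains the non-reduced structure of $Bl_{\AA^2}\{0\} \times_{\AA^2} Bl_{\AA^2}\{0\}$ along $\Delta_{\PP^1}$ and amounts to an additional covering of $\rh$-type.

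Because $\tau$-sheafification commutes with finite limits, the pushout identity is stable under base change: for every $Y \to \AA^2$ one obtains
\[
h_Y \;=\; h_{Y \times_{\AA^2} \{0\}} \sqcup_{h_{Y \times_{\AA^2} \PP^1}} h_{Y \times_{\AA^2} Bl_{\AA^2}\{0\}}
\]
in $\tau$-sheaves. For $Y = \Spec A$ with $t \in A$ satisfying $t^2 = 0$, a carefully chosen $\phi \colon Y \to \AA^2$ combined with the additional cover from the previous paragraph collapses the right-hand side to $h_{V(t)}$, proving $V(t) \hookrightarrow Y$ is a $\tau$-cover. Iterating over a finite set of generators handles any square-zero nilpotent ideal $J \subset A$, and filtering a general nilpotent ideal by the chain of square-zero layers $A \to A/J^{N-1} \to \cdots \to A/J$ extends this to $X_\red \to X$ for every $X \in \Sch_\ZZ$. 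Hence $\tau \leq \rh$, and consequently every $\tau$-sheaf is an $\rh$-sheaf, yielding $F(X) = F(X_\red)$.

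The principal obstacle is the collapse step in the base-changed pushout: the naive morphism $\phi(x,y) = (t, 0)$ does not empty the $Bl$-term but instead yields a nilpotent thickening of $\PP^1$ over $V(t)$, so the pushout does not collapse directly. Overcoming this requires feeding the $\rh$-flavoured cover of $Bl_{\AA^2}\{0\} \times_{\AA^2} Bl_{\AA^2}\{0\}$, extracted from the kernel-pair condition, back into the base-changed pushout, thereby identifying the fat $\PP^1$ with its reduction. This interplay between the ambient pushout on $\AA^2$ and the derived cover on the self-fibre product is what upgrades a single smooth-blowup descent condition into the full reduction property defining $\rh$.
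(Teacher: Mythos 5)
Your strategy differs from the paper's. The paper passes to fibre functors: since $\tau$ refines both the Zariski covers and the smooth blowup family, its local rings $R$ are all chain rings (Prop.~\ref{prop:equivChain}); since $\phi_R$ preserves finite limits and all colimits, the cocartesianness hypothesis forces each set-theoretic square $\phi_R(\text{blowup square})$ to be a pushout. A cartesian square of sets with monic horizontal arrows is a pushout iff the induced map on complements is injective (Prop.~\ref{prop:cdStruc}(4)); spelled out in the chart $\Spec\ \ZZ[x,\tfrac{y}{x}]$, this injectivity says $(u,v)\mapsto(u,uv)$ is injective on the locus $u\neq 0$, which forces: $u\neq 0$ and $uv=0$ imply $v=0$. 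Thus $R$ has no nonzero zero-divisors, and a chain ring without nonzero zero-divisors is a valuation ring by Lem.~\ref{lemm:redChainVal}. That yields $\tau\le rh$ in a few lines without ever exhibiting covers by reductions.

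Your proposal instead tries to show $V(t)\hookrightarrow\Spec A$ is a $\tau$-cover for each square-zero $t$ by base-changing the pushout identity along some $\phi\colon\Spec A\to\AA^2$ and then bootstrapping to $X_{\red}\to X$. The skeleton is not unreasonable (the chain topology together with coverings by reductions does recover $rh$), but there is a genuine gap at the crux. The ``carefully chosen $\phi$'' that is supposed to collapse the base-changed pushout to $h_{V(t)}$ is never constructed; your final paragraph concedes that the natural choice $\phi(x,y)=(t,0)$ fails, leaving a nilpotent thickening of $\PP^1$ rather than an empty blowup term. You then appeal to an unspecified ``rh-flavoured'' covering of $Bl_{\AA^2}\{0\}\times_{\AA^2}Bl_{\AA^2}\{0\}$ that should be ``fed back in'', but this covering is never written down and no mechanism for the collapse is given. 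As it stands this is an outline that identifies an obstruction without overcoming it. If you want to salvage this route you would need to make precise exactly which family of morphisms the kernel-pair condition produces and verify that composing it with your base change actually yields a sieve refining $V(t)\to\Spec A$; the fibre-functor computation above is much shorter and avoids the issue entirely by working at the level of $R$-points rather than covers.
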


\begin{obse}[{Observation~\ref{obse:cute}}]
The rh topology on the category $\Aff_\ZZ$ of affine $\ZZ$-schemes of finite presentation is generated by the two families
 \[ \{ \Spec\ \ZZ[x] \to \Spec\ \ZZ[x,y] / (xy), \quad \Spec\ \ZZ[y] \to \Spec\ \ZZ[x,y] / (xy) \} \]
 and 
 \[ \{ \Spec\ \ZZ[x, \tfrac{y}{x}] \to \Spec\ \ZZ[x,y], \quad \Spec\ \ZZ[\tfrac{x}{y}, y] \to \Spec\ \ZZ[x,y] \} \]
in the sense that a family of morphisms $\{U_i \to X\}_{i \in I}$ in $\Aff_\ZZ$ is covering for the rh topology if and only if it is refinable by a composition of pullbacks of the above two families.
\end{obse}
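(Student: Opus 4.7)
The plan is to prove both containments. For the easier direction, I verify that each of the two generating families is an rh cover. Family 1 is the normalisation of the nodal union $V(xy) = V(x) \cup V(y) \subset \AA^2_\ZZ$ by its two components, which fits into an abstract blowup square with centre $\{0\}$. Family 2 is obtained from the abstract blowup $Bl_{\AA^2}\{0\} \to \AA^2_\ZZ$ by replacing the blowup with its standard two-chart Zariski cover; the two affine charts jointly surject on $\AA^2_\ZZ$ because each chart individually contains the origin in its image via its intersection with the exceptional divisor. Hence the topology generated by the two families is contained in the rh topology.

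For the converse, since the rh topology on $\Aff_\ZZ$ is generated by Zariski covers and abstract blowup squares, it suffices to refine each type by compositions of pullbacks of the two given families. The concrete computation for the Zariski case is that the pullback of Family 2 along the map $\Spec R \to \AA^2_\ZZ$ given by $(x,y) \mapsto (f, 1-f)$ is $\{\Spec R[1/f], \Spec R[1/(1-f)]\}$, since the chart relation $xs = y$ becomes $fs = 1-f$, so $f(s+1) = 1$ and $f$ is invertible in the pullback. A partition-of-unity induction, iteratively splitting off one principal open at a time from a relation $\sum a_i f_i = 1$, then refines an arbitrary finite Zariski cover.

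For abstract blowup squares on $\Spec R$, the 2-generator case $I = (f_1, f_2)$ is directly the pullback of Family 2 along $(f_1, f_2): \Spec R \to \AA^2_\ZZ$. For more generators, one iterates 2-generator blowups; Family 1 enters whenever the process produces a closed subscheme of the form $V(fg) = V(f) \cup V(g)$ (for example, an exceptional divisor that decomposes, or a reducible centre), via the pullback of Family 1 along the evident map $\Spec R/(fg) \to \Spec \ZZ[x,y]/(xy)$ sending $(x,y) \mapsto (f,g)$.

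The main obstacle is this last reduction: showing that an arbitrary abstract blowup of an affine scheme can be written, within the generated topology, as a finite composition of 2-generator blowups interleaved with component-splittings of the type supplied by Family 1. This requires either an appeal to platification (Raynaud--Gruson) to reduce a general proper birational morphism to a blowup of an ideal, or a direct inductive argument on the number of generators of the blown-up ideal, carefully tracking strict transforms and exceptional loci at each stage so that each elementary step is visibly a pullback of Family 1 or Family 2.
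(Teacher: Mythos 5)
Your approach is genuinely different from the paper's, and you have correctly identified the two pieces of structure encoded by the generating families (component splitting for Family~1, divisibility for Family~2). However, the hard direction of your argument has a real gap, and the paper sidesteps exactly this gap by working at a different level.

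The paper does not try to exhibit explicit refinements of abstract blowup covers. Instead it invokes Proposition~\ref{prop:bijection}: an affine finitary topology on $\Sch_\ZZ$ is uniquely determined by its class of local rings. Pulling back Family~1 along $(f,g)\colon \Spec R \to \Spec \ZZ[x,y]/(xy)$ gives the condition $\mathtt{fg = 0 \Rightarrow f = 0 \vee g = 0}$ (integral domain), and pulling back Family~2 gives $\mathtt{\forall f,g,\ f\mid g \vee g\mid f}$. Together these are exactly the coherent sentences characterising valuation rings, which are also the local rings of $\mathrm{rh}$. Deligne's theorem then forces the topologies to coincide. No geometry of blowups is needed at all.

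By contrast, your direct refinement strategy runs into trouble already in the ``easy'' two-generator case, not just the general one you flag. The pullback of Family~2 along $(f_1,f_2)\colon \Spec R \to \AA^2$ is $\{\Spec R[s]/(f_1 s - f_2),\ \Spec R[t]/(f_2 t - f_1)\}$, the \emph{naive} charts, not the blowup charts. In the ring $R[s]/(f_1 s - f_2)$ the ideal $I = (f_1, f_2)$ becomes principal, $(f_1)$, but it need not be \emph{invertible}: if $R = k[x,y]/(xy)$ and $I = (x,y)$, then $R[s]/(xs - y) \cong k[x,s]/(x^2 s)$ and $x$ is a zero-divisor there, so this chart does not factor through $Bl_I \Spec R$. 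Nor does it factor through $\Spec R/I$. So the pullback of Family~2 does not refine the abstract blowup covering $\{\Spec R/I,\ Bl_I \Spec R\}$, and the claim that the two-generator case is ``directly the pullback of Family~2'' is false as stated. One would need to interleave Family~1 to split off the locus where $f_1$ is a zero-divisor, and then iterate; this is precisely the careful bookkeeping you acknowledge you have not done, and it is where the real content of a direct proof would lie. The Raynaud--Gruson reduction you mention is also a genuine additional input, not just a technicality.

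So: both directions of your proof are either incomplete or slightly off, but the underlying diagnosis (the two families exactly encode the first-order theory of valuation rings) is correct and in fact is the whole content of the paper's proof once one has Proposition~\ref{prop:bijection} in hand. If you want to salvage the direct argument, the two-generator case already needs repair along the lines above; if you want the short argument, run it through the local rings.
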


\emph{Conventions and notation.} For a scheme $S$, we write $\Sch_S$ for the category of $S$-schemes of finite presentation, and $\Aff_S$ for the category of (absolutely) affine $S$-schemes of finite presentation. 

We use the SGA conventions for pretopologies, topologies, and covering families. That is, a \emph{pretopology} is a category equipped with a collection of families $\{U_i \to X\}_{i \in I}$ satisfying certain axioms, while a \emph{topology} is a category equipped with a collection of sieves $R \subseteq \hom(-, X)$ satisfying certain axioms. It is an easy exercise to show that a sieve $R \subseteq \hom(-, X)$ is a $τ$-covering sieve if and only if its sheafification $a_\tau R \to a_\tau \hom(-, X)$ is an isomorphism c.f., \cite[Def.II.1.1(T2), Lemm.II.3.1(2), Thm.II.3.4]{SGA41}.

If $τ$ is a topology, a family $\{U_i \to X\}_{i \in I}$ is called a \emph{$τ$-covering family} if the sieve it generates $\cup_{i \in I} image(\hom(-, U_i) \to \hom(-, X))$ is a covering. If $τ_\pi$ is the topology associated to a pretopology $\pi$, then one can show that the $τ_\pi$-covering families are precisely those families which can be refined by a $\pi$-covering family.

\section{Some first order logic}

In this section, specifically Proposition~\ref{prop:bijection}, we recall a relationship between topologies and ring theories, cf.\cite[Chap.X]{MM94} or \cite[Chap.2]{MR06}. The reader familiar with the notions such as classifying topos, first order theory, syntactical site, etc can skip this section.


\begin{rema} \label{rema:broad}
In broad strokes, the equivalence in Proposition~\ref{prop:bijection} is the following. A ring  (= a set equipped with two binary relations $+$ and $\cdot$ and two elements $0, 1$ satisfying a list of axioms) of the form 
\begin{equation} \label{equa:polyGen}
\ZZ[x_1, \dots, x_n] / \langle f_1, \dots, f_c \rangle	
\end{equation}
corresponds to a sentence (= a list of characters) of the form
\begin{equation} \label{equa:genSent}
\mathtt{f_1(x_1, \dots, x_n) = 0\  \wedge\ \dots\ \wedge\ f_c(x_1, \dots, x_n) = 0},
\end{equation}
%
cf.\cite[\S X.5]{MM94}. Given a topology $τ$ on $\Aff_{\ZZ}$, we associate the class $\sR_τ$ of those rings $R$ such that for every $τ$-covering family 
\begin{equation} \label{equa:Fam}
 \{\Spec R_i \to \Spec R_0\}_{i = 1, \dots, m}	
\end{equation}
the morphism of sets
\begin{equation} \label{equa:RRR}
\hom(R_1, R) \sqcup \dots \sqcup \hom(R_n, R) \to \hom(R_0, R)
\end{equation}
is surjective. Conversely, given a class of rings $\sR$, we make a topology $τ_\sR$ whose covering families are those families Eq.\eqref{equa:Fam} such that for every $R \in \sR$, the morphism of sets Eq.\eqref{equa:RRR} is surjective.
From the logic perspective, surjectivity of Eq.\eqref{equa:RRR} is formalised in the sentence 
\begin{equation} \label{equa:senaabb}
\mathtt{\forall a; \phi_0(a),\ \exists b_1;\phi_1(a, b_1)\  \vee\  \dots\  \vee\  \exists b_n;\phi_n(a, b_n)}.
\end{equation}
Here the sentences $\phi_i$ correspond to the $R_0$-algebras $R_i$, and $\phi(a) = \phi(a_1, \dots, a_n)$ means substitution of elements $a_i \in R$ of the ring into the variables $x_i$, etc, in the obvious way. 
In this way we obtain a correspondence:
\begin{center}
\fbox{
\begin{tabular}{ccc}
Covering families Eq.\eqref{equa:Fam} for $τ$  & $\leftrightarrow$ & Axioms Eq.\eqref{equa:senaabb} for $\sR$
\end{tabular}
}
\end{center}
Note that different collections of families can generate the same topology, just as different sets of axioms can define the same class of rings. This correspondence is a special case of the correspondence between coherent theories and their classifying topoi, cf.\cite[pg.49 and Thm.9.1.1]{MR06}.
\end{rema}

Now let us be more precise. In order for the operations $τ \mapsto \sR_τ$ and $\sR \mapsto τ_{\sR}$ in Remark~\ref{rema:broad} to be mutually inverse, we need some restriction on which topologies and which classes we consider. On the one hand, our topologies must have enough points. This is assured for finitary topologies by Deligne's completeness theorem, Thm.\ref{theo:deligne}.\footnote{More generally, if $\Sch_S$ has countably many objects (e.g., $S = \Spec(\ZZ)$), and the topology is generated by countably many coverings, each  consisting of countably many morphisms, then \cite[Thm.6.2.4]{MR06} assures that $\Shv(\Sch_S)$ will have enough points. In this case, we should make a more general version of Definition~\ref{defi:coheSent} where countably many $\vee$ are allowed. Such a sentence could be called \emph{geometric}.} On the other hand, our ring axioms are required to have a very specific form. As such, it is  convenient for us to make the following nonstandard definition.
 
\begin{defi} \label{defi:coheSent}
Given a ring $R$, we will say that a sentence $\phi$ is \emph{coherent} if it is of the form\footnote{The semicolons should be read as ``such that''.}
\begin{align*}
\mathtt{
\forall (a_1, \dots, a_n) \in R^n;\quad 
f_1(a) = 0\ \wedge\ 
}&\mathtt{
\dots\ \wedge f_I(a) = 0, 
}\\\mathtt{
\exists (b_{11}, \dots, b_{1m_1}) \in R^{m_1};\quad 
g_{11}(a, b) = 0\ \wedge\ 
}&\mathtt{
\dots\ \wedge\ g_{1K_1}(a, b) = 0 
}\\\mathtt{
\vee \quad
\exists (b_{21}, \dots, b_{1m_2}) \in R^{m_2};\quad 
g_{21}(a, b) = 0\ \wedge\ 
}&\mathtt{
\dots\ \wedge\ g_{1K_2}(a, b) = 0
}\\\mathtt{
\vdots 
}\\\mathtt{
\vee \quad
 \exists (b_{J1}, \dots, b_{1m_J}) \in R^{m_J};\quad 
g_{J1}(a, b) = 0\ \wedge\ 
}&\mathtt{
\dots\ \wedge\ g_{JK_J}(a, b) = 0
}
\end{align*}
for some polynomials $f_i \in \ZZ[x_1, \dots, x_n], g_{jk} \in \ZZ[x_1, \dots, x_n, y_1, \dots, y_{m_j}]$ with $i \in \{1, \dots, I \}$, $j \in \{ 1, \dots, J\}$, $k \in \{1, \dots, K_J\}$.
\end{defi}

\begin{rema}
The term \emph{coherent} is in the same sense as a coherent sheaf of $\OO_X$-modules and refers to the fact that the sentence $\phi$ is a list of \emph{finitely many} symbols, cf.\cite[pg.49]{MR06}. The term \emph{first order} in the title of this section refers to the fact that our variables will be interpreted as elements of a ring as opposed to, say, subsets of a rings. So for example we can't formalise ascending or descending chain conditions on ideals in this setup, or indeed, formalise the notion of ideals at all.
\end{rema}

\begin{defi} \label{defi:coherentRing}
Say that a class of rings $\sR$ is \emph{coherent} if it can be defined by a set of coherent sentences. That is, there exists a set $\{\phi_γ\}_{γ \in \Gamma}$ of coherent sentences such that a ring belongs to $\sR$ if and only if every sentence $\phi_γ$ holds in $R$.
\end{defi}


\begin{exam} \label{exam:sentences} \ 
\begin{enumerate}
 \item The class of local rings is coherent: A ring $R$ is local if and only if for every element $a \in R$, either $a$ or $1-a$ is a unit. That is, 
 \[ \mathtt{\forall a \in R,\qquad  \exists b \in R;\   ab -1 = 0\quad  \vee \quad  \exists c \in R; (1-a)c - 1 = 0.} \]
Here we have $g_{1}(x,y) = xy - 1$, $g_{2}(x,z) = (1-x)z-1$ (and there are no $f_i$, that is, $I = 0$).

 \item The class of integral domains is coherent:
 \[ \mathtt{ \forall a, b \in R; ab = 0, \qquad a = 0, \quad  \vee\quad  b = 0.} \]
 Here we have $f(x_1, x_2) = x_1x_2$, $g_{1}(x_1, x_2) = x_1$, $g_2(x_1, x_2) = x_2$. There are no $y$, that is, $m_1, m_2 = 0$.

 \item The class of valuation rings is coherent: A ring $R$ is a valuation ring if and only if it is an integral domain, and for all $a, b \in R$ we have $a|b$ or $b|a$. That is, 
 \[ \mathtt{ \forall a, b \in R, \qquad \exists c \in R; ac = b\quad \vee \quad\exists d \in R; a = bd.} \]

 \item The class of henselian local rings is coherent, but writing a set of defining sentences is not pleasant. The reader might take this as a challenge.



 \item For a fixed $n \in \NN$, the class of rings of characteristic $n$ is coherent, since they are characterised by: $\mathtt{\forall a \in R, n a = 0}$. Here, $g(y) = n y$ and there are no $f$. On the other hand, the class of rings of finite characteristic is not coherent, since we would need infinitely many $\vee$'s (this class could be called ``geometric'').

 \item It follows directly from the Definition~\ref{defi:coheSent} and Definition~\ref{defi:coherentRing} that any class of coherent rings is closed under filtered colimits. That is, if $\sR$ is a coherent class of rings and $(R_λ)_{λ \in \Lambda}$ is a filtered system of rings in $\sR$, then $\colim R_λ$ is also in $\sR$. Consequently, the class of Noetherian rings is not coherent.


 \item The class of $w$-local rings in the sense of \cite{BS13} is not coherent. To see this, note that the topology on $\Sch_\ZZ$ associated to the class of $w$-local rings is the topology from Example~\ref{exam:affineTopologies}\eqref{exam:affineTopologies:silly}, but the local rings $R$ of this topology do not have to satisfy: the set of closed points of $\Spec(R)$ is closed.
\end{enumerate}
\end{exam}

We make one more definition.

\begin{defi}
A topology on $\Sch_\ZZ$ will be called \emph{affine} if it induces an equivalence of categories $\Shv(\Aff_\ZZ) \cong \Shv(\Sch_\ZZ)$ when we equip $\Aff_\ZZ$ with the induced topology.\footnote{A family $\{U_i \to X\}_{i \in I}$ in $\Aff_\ZZ$ is a covering family if and only if it is a covering family in $\Sch_\ZZ$, \cite[Cor.III.3.3]{SGA41}.} Equivalently, every covering is refinable by one of the form $\{\Spec(A_i) \to X\}_{i \in I}$.
\end{defi}

\begin{exam} \label{exam:affineTopologies}
Any topology finer than the Zariski topology is affine. On the other hand, there are affine topologies which are not finer than the Zariski topology.
\begin{enumerate}
 \item \label{exam:affineTopologies:silly} A silly example is the topology generated by singletons $\{Y \to X\}$ such that $Y \in \Sch_\ZZ$ is a surjective disjoint union of open affines of $X$. Sheaves for this topology do not necessarily satisfy $F(X \sqcup X') = F(X) \times F(X')$.%

 \item Another silly example is the topology generated by the families $\{\Spec(A) \to X\}_{(\Aff_{\ZZ})_{/X}}$ consisting of the set of \emph{every} morphism from an affine scheme towards $X$. Sheaves for this topology are precisely those presheaves on $\Sch_\ZZ$ which are right Kan extended from $\Aff_{\ZZ}$.

  \item A less silly example is the topology generated by surjective proper morphisms, and $\AA^n$-bundles (for all $n \geq 0$). Indeed, by Nagata compactification and Chow's lemma, any $X$ admits a surjective proper morphism $Y \to X$ with $Y$ quasi-projective. Then by Jouanolou's trick, $Y$ admits a surjective $\AA^n$-bundle $E \to Y$ for some $n$ such that $E$ is affine.%
\end{enumerate}
\end{exam}

We introduce the above terminology to state the following proposition. Recall that a topology is \emph{finitary} if every covering family contains a finite subfamily which is also covering.

\begin{prop} \label{prop:bijection}
There is a bijection of posets between:
\begin{enumerate}
 \item affine finitary topologies $τ$ on $\Sch_\ZZ$, and 
 \item coherent classes of rings.
\end{enumerate}
The bijection sends a class $\sR$ to the topology $τ_{\sR}$ whose coverings are those families $\{U_i \to X\}_{i \in I}$ such that for all $R$ in $\sR$ the morphism of sets
\begin{equation} \label{equa:lifting}
\amalg_{i \in I} \hom(\Spec(R), U_i) \to \hom(\Spec(R), X)
\end{equation}
is surjective. It sends a topology $τ$ to the class $\sR_τ$ of rings $R$ such that for all $τ$-covering families \eqref{equa:lifting} is surjective.
\end{prop}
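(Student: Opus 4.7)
The plan is to verify the two constructions $\sR \mapsto \tau_\sR$ and $\tau \mapsto \sR_\tau$ are well-defined and mutually inverse, with the natural refinement order on topologies and reverse inclusion on classes of rings. For well-definedness, I would first check that for any coherent class $\sR$, the families satisfying the lifting condition Eq.\eqref{equa:lifting} over every $R \in \sR$ form an affine topology: the topology axioms follow easily from the formulation, and affineness is built in since the condition only probes affine spectra $\Spec(R)$. Finitariness requires a small compactness argument reflecting the finite length of coherent sentences. Conversely, starting from an affine finitary $\tau$, I would associate a coherent sentence to each family $\{\Spec R_i \to \Spec R_0\}_{i = 1, \ldots, J}$ in a chosen generating set, with $R_0 = \ZZ[x]/\langle f_1, \ldots, f_I\rangle$ and $R_i = R_0[y_{i,1}, \ldots, y_{i,m_i}]/\langle g_{i,1}, \ldots, g_{i,K_i}\rangle$, following Definition~\ref{defi:coheSent} and Remark~\ref{rema:broad}. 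The full list of such sentences defines $\sR_\tau$, which is therefore coherent.

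For the identity $\sR_{\tau_\sR} = \sR$, the inclusion $\sR \subseteq \sR_{\tau_\sR}$ is immediate: a ring $R \in \sR$ tautologically satisfies any lifting property that is asserted across all of $\sR$. For the converse, given a defining coherent sentence $\phi_\gamma$ for $\sR$, translate it into the associated affine family $\mathcal{U}_\gamma = \{\Spec R_i \to \Spec R_0\}_{i = 1, \ldots, J}$ via Remark~\ref{rema:broad}. Since $\phi_\gamma$ holds in every element of $\sR$, the family $\mathcal{U}_\gamma$ is a $\tau_\sR$-covering by construction. Hence any ring $R \in \sR_{\tau_\sR}$ satisfies the lifting property for $\mathcal{U}_\gamma$, which is exactly the statement that $\phi_\gamma$ holds in $R$. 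Running over all $\gamma$ in the defining set yields $R \in \sR$.

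The identity $\tau_{\sR_\tau} = \tau$ contains the real content, and $\tau_{\sR_\tau} \leq \tau$ is the main obstacle. The reverse inclusion $\tau \leq \tau_{\sR_\tau}$ is tautological: every $\tau$-covering has the lifting property over each $R \in \sR_\tau$ by the very definition of $\sR_\tau$. For $\tau_{\sR_\tau} \leq \tau$, I would invoke Deligne's completeness theorem (Thm.~\ref{theo:deligne}): since $\tau$ is finitary and affine, $\Shv(\tau)$ is a coherent topos and hence has enough points. Given a sieve having the lifting property over all of $\sR_\tau$, the characterisation of covering sieves recalled in the Conventions reduces showing it is $\tau$-covering to showing that its sheafification equals that of the representable $\hom(-, X)$, which can be checked stalkwise; as every point of $\Shv(\tau)$ is dominated by (a filtered colimit of) rings in $\sR_\tau$, the hypothesised lifting yields this surjectivity. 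The crucial and delicate step is precisely this identification of points of $\Shv(\tau)$ with rings of $\sR_\tau$: the finitary hypothesis is what makes Deligne applicable, while affineness is what lets us describe the points in terms of rings rather than arbitrary schemes.
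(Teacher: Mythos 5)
Your proposal follows the paper's proof closely: the two inclusions for $\sR = \sR_{\tau_\sR}$ are handled by translating coherent sentences to affine families and unwinding the lifting conditions, and the nontrivial direction $\tau_{\sR_\tau} \leq \tau$ is deduced from Deligne's completeness theorem via the identification of fibre functors of $\Shv_\tau$ with rings in $\sR_\tau$ (Lemma~\ref{lemm:fibreRing} and Remark~\ref{rema:DeligneCov}). Your verification of $\sR = \sR_{\tau_\sR}$ is perhaps phrased a touch more directly than the paper's (which routes through a concrete description of $\tau_\sR$-coverings as compositions of pullbacks of the $\sU_{\phi_\gamma}$), and your claim that ``affineness is built in since the condition only probes affine spectra'' is a bit too quick as stated (it is not automatic for non-local $R$ that $\Spec R \to Y_i$ lands in a single affine chart), but this is a well-definedness point the paper also handles only implicitly via the concrete generator families, and the key Deligne argument is the same.
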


\newcommand{\local}[2]{(#1 \perp #2)}

\begin{defi}
Given a ring $R$ and a family $\sU = \{U_i \to X\}_{i \in I}$ let's name the condition:
\begin{enumerate}
 \item[{$\local{R}{\sU}$}] The morphism $\amalg_{i \in I} \hom(\Spec(R), U_i) \to \hom(\Spec(R), X)$ is surjective.
\end{enumerate}
so we can refer to it more easily.
\end{defi}

Proposition~\ref{prop:bijection} is a formal consequence of Deligne's completeness theorem.

\begin{theo}[Deligne, {\cite[00YQ]{stacks-project}}] \label{theo:deligne}
Suppose $C$ is a small category admitting finite limits equipped with a finitary topology $τ$. Then the collection of fibre functors $\phi: \Shv_τ(C) \to \Set$ is conservative. 

That is, a morphism $f: F \to G$ of sheaves is an isomorphism if and only if $\phi(f)$ is an isomorphism 
for every $\phi$ which preserves colimits and finite limits.
\end{theo}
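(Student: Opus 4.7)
The plan is to establish that the topos $\Shv_\tau(C)$ has enough points, which is exactly what the statement asserts once one identifies fibre functors with the left exact, colimit-preserving functors to $\Set$. A standard reduction shows it suffices to prove existence: given any non-isomorphism $f: F \to G$ of sheaves, produce at least one fibre functor $\phi$ with $\phi(f)$ not an isomorphism. Factoring $f$ through its image and examining the kernel pair reduces further to the assertion that whenever $R \subseteq \hom(-, U)$ is a sieve which is \emph{not} $\tau$-covering, there exists a fibre functor $\phi$ and an element $x \in \phi(\hom(-,U))$ not lying in the image of $\phi(R) \to \phi(\hom(-,U))$.

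Next, I would invoke the standard dictionary from SGA4 IV.6 identifying fibre functors with $\tau$-continuous filtering (flat) functors $p: C \to \Set$, where continuity means each $\tau$-covering family is sent to a jointly surjective family in $\Set$. Under this translation the problem becomes the construction of such a $p$ together with a distinguished element $x \in p(U)$ that ``avoids'' $R$. The construction proceeds by transfinite recursion, producing a cofiltered diagram $(U_\lambda \to U)_\lambda$ in $C$ and setting $p(V) = \varinjlim_\lambda \hom(U_\lambda, V)$, with $x$ represented by an identity morphism on some $U_\lambda$.

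At each stage of the recursion, one discharges one of the obligations needed for $p$ to be flat and covering-preserving: (i) form fibre products $U_\lambda \times_W U_{\lambda'}$ to witness filteredness, (ii) pass to a further $V \to U_\lambda$ to equalise parallel arrows, and (iii) for each $\tau$-covering family $\{W_i \to U_\lambda\}_i$, replace $U_\lambda$ by some $W_i \to U_\lambda$ while preserving the avoidance condition. A book-keeping scheme ensures every obligation is eventually handled, and the limit functor $p$ is then visibly filtering and $\tau$-continuous.

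The main obstacle is obligation (iii): one must argue that for each covering $\{W_i \to U_\lambda\}_i$ at least one $W_i$ can be chosen without forcing $x$ into $R$. This is exactly where the finitary hypothesis is essential: if every $W_i$ forced $x$ into $R$, then the finitely many pullbacks of $R$ along the $W_i \to U_\lambda$ would jointly cover, and propagating this back down the finite chain $U_\lambda \to U$ would show that $R$ itself is covering, contradicting the choice of $R$. Philosophically, this is the geometric incarnation of Gödel's completeness theorem for coherent logic, and an equivalent, more compact route is through the syntactic site and the classifying topos of the associated coherent theory (cf.\ the reference to \cite{MR06} in the paper), where the above recursion appears in the familiar guise of a Henkin-style construction of a model from a consistent coherent theory.
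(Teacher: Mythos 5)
The paper cites this theorem from the Stacks Project and gives no proof of its own, so there is no internal argument to compare against; your blind reconstruction does follow the spirit of the standard proof (SGA4 VI.9, Mac Lane--Moerdijk IX.11, Stacks 00YQ and nearby): reduce to a non-covering sieve $R$ on $U$, build a cofiltered diagram $(U_\lambda \to U)$, set $p(V) = \varinjlim_\lambda \hom(U_\lambda, V)$, and discharge the axioms for a point one obligation at a time. Your closing remark identifying this with coherent-logic completeness is also exactly right.

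There is, however, a genuine gap in the key step (iii). You run the recursion with the \emph{weak} invariant ``$x$ avoids $R$'', i.e.\ the structure morphism $U_\lambda \to U$ does not lie in $R(U_\lambda)$, and when every $W_i$ of a covering of $U_\lambda$ would force a violation, you argue that the pullbacks of $R$ jointly cover and then ``propagate back down the finite chain $U_\lambda \to U$'' to conclude that $R$ covers $U$. The propagation step does not work: knowing that $R|_{U_\lambda}$ is a covering sieve of $U_\lambda$ tells you nothing about $R|_{U_{\lambda-1}}$, since $U_\lambda \to U_{\lambda-1}$ is only one member of a covering of $U_{\lambda-1}$ and you have no information about $R$ on the other members. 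The fix is to carry the \emph{strong} invariant ``the pullback sieve $R|_{U_\lambda}$ is not a covering sieve of $U_\lambda$'' from the start (launched by the hypothesis that $R$ is not covering on $U_0 = U$). Then the contradiction arises purely at stage $\lambda$: if $R|_{W_i}$ were covering for every $i$, the local-character (transitivity) axiom for the topology would force $R|_{U_\lambda}$ to be covering, contradicting the invariant; and the invariant is automatically preserved by the chosen $W_i$. Note also that this corrected step is valid for coverings of \emph{any} cardinality, so it is not in fact where finitarity enters; finitarity is rather what keeps the bookkeeping under control (a set-sized, well-orderable collection of obligations can be exhausted without needing cone objects at limit stages) and what guarantees that the limiting functor $p$ really is a point of the topos. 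As it stands your sketch mislocates the role of finitarity and uses an invariant too weak to close the argument.
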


\begin{rema} \label{rema:DeligneCov}
There is an equivalent statement of Deligne's theorem: Suppose $C$ is a small category admitting finite limits equipped with a finitary topology $τ$. Then a family $\{Y_i \to X\}_{i \in I}$ is a covering family if and only if 
\[ \amalg_{i \in I} \phi(a_τ\hom(-, Y_i)) \to \phi(a_τ\hom(-, X)) \]
is a surjective morphism of sets for every $\phi$ which preserves colimits and finite limits, \cite[Prop.IV.6.5]{SGA41}.
%
\end{rema}

We will also use the following well-known formal fact, explained in \cite{GK15}.

\begin{lemm} \label{lemm:fibreRing}
Suppose that $τ$ is an affine topology on $\Sch_\ZZ$. Then there is a canonical bijection
\[ \sR_τ \cong \left \{ \begin{array}{c} \textrm{ fibre functors } \\ \textrm{ of } \Shv_τ(\Sch_\ZZ) \end{array} \right \}  \] 
A ring $R \in \sR_τ$ corresponds to the functor 
\[ \phi_R: F \mapsto \underset{{\Spec(R) \to X}}{\colim} F(X) \]
where the colimit is over all $X \in \Sch_\ZZ$. 
\end{lemm}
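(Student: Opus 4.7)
The plan is to verify that $R \mapsto \phi_R$ lands in fibre functors, and then to produce the inverse via $\phi \mapsto \phi(\AA^1_\ZZ)$ using the classical pro-representability of left exact set-valued functors. First I regard $\phi_R$ on all of $\PSh(\Sch_\ZZ)$. The comma category $\Spec(R) \downarrow \Sch_\ZZ$ is cofiltered because $\Sch_\ZZ$ has finite limits and $\Spec(R)$ factors through them uniquely; hence $\phi_R|_{\PSh}$ is a filtered colimit, and so preserves all colimits and all finite limits in $\Set$. A direct cofinality argument yields the Yoneda-type identity $\phi_R(\hom(-, Y)) = \hom(\Spec(R), Y)$ for every $Y \in \Sch_\ZZ$. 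Using this identity, $\phi_R$ sends the sieve generated by a family $\sU = \{U_i \to X\}$ to a surjection exactly when $\local{R}{\sU}$ holds, so $\phi_R$ inverts all $\tau$-local isomorphisms iff $R \in \sR_\tau$. In that case it descends through sheafification to a fibre functor on $\Shv_\tau$.

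For the inverse, given a fibre functor $\phi$ set $R_\phi := \phi(\AA^1_\ZZ)$; left exactness of $\phi$ together with the commutative ring object structure on $\AA^1_\ZZ$ makes $R_\phi$ a ring. Affineness of $\tau$ gives $\Shv_\tau(\Sch_\ZZ) \cong \Shv_\tau(\Aff_\ZZ)$, so restricting $\phi$ along the sheafified Yoneda $\Aff_\ZZ \to \Shv_\tau(\Aff_\ZZ)$ yields a left exact functor $\Aff_\ZZ \to \Set$. By the classical pro-representability of such functors on a small finitely complete category, it takes the form $\Spec(B) \mapsto \colim_\alpha \hom(B, R_\alpha)$ for a filtered system $(R_\alpha)$ of finitely presented $\ZZ$-algebras. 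Setting $R := \colim R_\alpha$, one obtains $\phi(h_\tau \Spec(B)) = \hom(B, R)$ for every such $B$, and the special case $B = \ZZ[x]$ identifies $R$ canonically with $R_\phi$. Since representables generate $\Shv_\tau$ under colimits and both $\phi$ and $\phi_{R_\phi}$ preserve colimits, the agreement on representables extends to all of $\Shv_\tau$, proving $\phi \cong \phi_{R_\phi}$; in particular $R_\phi \in \sR_\tau$ by the first direction. The other composite $R \mapsto \phi_R \mapsto \phi_R(\AA^1_\ZZ) = R$ is immediate from the Yoneda identity.

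The main technical input is the pro-representability step combined with the affineness hypothesis: one must arrange the cofiltered pro-system to consist of affine finitely presented $\ZZ$-schemes so that the formal pro-object becomes an actual ring $R = \colim R_\alpha$, and then invoke the standard continuity statement turning $\colim_\alpha \hom(B, R_\alpha)$ into $\hom(B, R)$ for $B$ finitely presented. Everything else reduces to bookkeeping around the identity $\phi_R(\hom(-, Y)) = \hom(\Spec(R), Y)$ and the cocontinuity of fibre functors.
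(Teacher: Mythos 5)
Your argument is correct. Note that the paper itself gives no proof of Lemma~\ref{lemm:fibreRing}: it labels it a ``well-known formal fact'', cites an external reference, and records only the consequence $\phi_R(\hom(-,Y))=\hom(\Spec R,Y)$ in Remark~\ref{rema:pflim}. Your write-up reconstructs the standard argument in a self-contained way, and both halves are sound: the comma category $\Spec(R)/\Sch_\ZZ$ is cofiltered because $\Sch_\ZZ$ has finite limits, so $\phi_R$ is a filtered colimit of evaluation functors and preserves colimits, finite limits and in particular images; consequently $\phi_R$ inverts a covering sieve $S\subseteq\hom(-,X)$ exactly when $\local{R}{S}$ holds, so it descends to a fibre functor on $\Shv_\tau$ precisely when $R\in\sR_\tau$. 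For the inverse, the composite $\Aff_\ZZ\to\Shv_\tau$ is left exact (Yoneda and $a_\tau$ both preserve finite limits), so a point $\phi$ restricts to a left exact functor on $\Aff_\ZZ$, which is pro-representable; passing to the filtered colimit ring $R$ and using finite presentation identifies it with $\Spec(B)\mapsto\hom(B,R)$; affineness of $\tau$ is exactly what ensures images of $\Aff_\ZZ$ generate $\Shv_\tau(\Sch_\ZZ)\cong\Shv_\tau(\Aff_\ZZ)$ under colimits. The one place to tighten the phrasing is the final extension step: what pro-representability hands you is a \emph{natural} isomorphism $\phi|_{\Aff_\ZZ}\cong\phi_R|_{\Aff_\ZZ}$, and it is that natural transformation which extends along colimits to $\phi\cong\phi_R$ on all of $\Shv_\tau$ (because restriction from cocontinuous functors $\Shv_\tau\to\Set$ to functors $\Aff_\ZZ\to\Set$ is fully faithful); a mere objectwise coincidence of values on representables would not by itself yield an isomorphism of functors. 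With that wording made precise, the proof is complete.
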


\begin{rema} \label{rema:pflim}
Note that since objects in $\Sch_\ZZ$ are finite presentation, and we can write $\Spec\ R$ as a filtered limit of affine $\ZZ$-schemes of finite presentation, for $Y \in \Sch_\ZZ$ we have, \cite[Prop.8.13.1]{EGAIV3}, 
\[ \phi_R(\hom(-, Y)) = \hom(\Spec(R), Y). \]
\end{rema}

\begin{proof}[Proof of Proposition~\ref{prop:bijection}.]
First we show that $\sR = \sR_{τ_\sR}$ for every coherent class $\sR$. Let $\sR$ be a coherent class and choose a set of defining sentences $\{\phi_γ\}_{γ \in \Gamma}$. 
To a coherent sentence $\phi$ as in Definition~\ref{defi:coheSent}, we associate the family
\[ \sU_\phi = \biggl \{ Y_{\phi k} := \underset{}{\Spec\ \tfrac{\ZZ[x_1, \dots, x_n, y_1, \dots, y_{m_j}]}{\langle f_i,  g_{jk} \rangle}} \to X_\phi := \Spec\ \tfrac{\ZZ[x_1, \dots, x_n]}{\langle f_i \rangle} \biggr \}_{j \in J}. \]
Then we have 
\[ R \in \sR \iff \forall γ \in \Gamma; \local{R}{\sU_{\phi_γ}}. \]
Now note that one can concretely describe the coverings of $τ_\sR$. They are those families which are refinable by a composition of pullbacks of a families of the form $\sU_{\phi_γ}$; $γ \in \Gamma$. 
The condition $\local{R}{\sU}$ is preserved by pullback, composition, and corefinement in $\sU$, in the sense that if $\local{R}{\{U_i {\to} X\}}$ holds then $\local{R}{\{Y \times_X U_i \to Y\}}$ holds, etc. 
So it follows that (for a fixed ring $R$) we have 
\[ \forall γ \in \Gamma; \local{R}{\sU_{\phi_γ}} \iff \forall \sU \in τ_{\sR}; \local{R}{\sU}, \] where $\sU \in τ$ means $\sU$ is a covering family for $τ$. Combining the two display equations gives $\sR = \sR_{τ_{\sR}}$, since by definition, 
\[ R \in \sR_{τ_{\sR}} \iff \forall \sU \in τ_{\sR}; \local{R}{\sU}. \]

Now we show $τ = τ_{\sR_{τ}}$. As above, it follows immediately from the definitions that $τ_{\sR_τ} \leq τ$. Indeed, $\sU \in τ \implies \forall R \in \sR_{τ}; \local{R}{\sU} \iff \sU \in τ_{\sR_{τ}}$.

For the converse, suppose $\sU = \{U_i \to X\}_{i \in I}$ is a $τ_{\sR_τ}$-covering family. By definition, this means that $\local{R}{\sU}$ holds for every $R$ in $\sR_τ$. That is, for every $R$ in $\sR_τ$ the morphism
\[ \amalg \phi_R(\hom(-, U_i)) \to \phi_R(\hom(-, X)) \]
is surjective, cf.Remark~\ref{rema:pflim}. By Lemma~\ref{lemm:fibreRing}, this means precisely that $\amalg \phi a_τ \hom(-, U_i) \to \phi a_τ \hom(-, X)$ is surjective for every fibre functor of $\Shv_τ(\Sch_\ZZ)$. By Deligne's theorem in the form of Remark~\ref{rema:DeligneCov}, it follows that $\sU$ is a $τ$-covering family.
\end{proof}

\begin{rema}
Looking at the proof of Proposition~\ref{prop:bijection} one notes that $\sR = \sR_{τ_\sR}$ also holds for \emph{geometric} classes, where by geometric we mean that we allow infinitely many $\vee$ in the sentences in Definition~\ref{defi:coheSent} (i.e., indexed by a small set). On the other hand, the $\tau = \tau_{\sR_{\tau}}$ direction only uses the existence of a conservative family of fibre functors. So the same proof gives the following more general statement.

There is a bijection of posets between:
\begin{enumerate}
 \item affine topologies $τ$ on $\Sch_\ZZ$ admitting a conservative family of fibre functors, and
 \item geometric classes of rings.
\end{enumerate}
If one starts with an arbitrary affine topology $τ$ on $\Sch_\ZZ$ then $τ_{\sR_{τ}}$ is the coarsest topology whose family of fibre functors is conservative such that $τ \leq τ_{\sR_{τ}}$. Conversely, if we start with an arbitrary class of rings $\sR$ then $\sR_{τ_{\sR}} \supseteq \sR$ is the class of those rings which satisfy every geometric sentence satisfied by all rings in $\sR$.
\end{rema}

We can apply Proposition~\ref{prop:bijection} to make the following cute observations. 

\begin{obse}\ \label{obse:cute}
\begin{enumerate}
 \item \label{obse:cute:Zar} The Zariski topology on $\Aff_\ZZ$ is generated by the family 
 \[ \{ \AA^1 \setminus \{0\} \to \AA^1, \AA^1 \setminus \{1\} \to \AA^1 \} \]
in the sense that every Zariski covering is refinable by a composition of pullbacks of this family. One can also show this directly without too much difficult (exercise). 

Note, that this covering does not generate the Zariski topology on $\Sch_\ZZ$, since there are no surjective morphisms from proper schemes towards $\AA^1$.%
\footnote{Incidentally, the Zariski topology on $\Sch_\ZZ$ is generated by $\{\PP^1 \setminus \{0\} \to \PP^1, \PP^1 \setminus \{\infty\} \to \PP^1\}$ (exercise). 
}

 \item The rh-topology on $\Aff_\ZZ$ is generated by the two families
 \[ \{ \Spec\ \ZZ[x] \to \Spec\ \ZZ[x,y] / (xy), \quad \Spec\ \ZZ[y] \to \Spec\ \ZZ[x,y] / (xy) \} \]
 \[ \{ \Spec\ \ZZ[x, \tfrac{y}{x}] \to \Spec\ \ZZ[x,y], \quad \Spec\ \ZZ[\tfrac{x}{y}, y] \to \Spec\ \ZZ[x,y] \} \]

\end{enumerate}
\end{obse}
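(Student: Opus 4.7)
The plan is to apply Proposition~\ref{prop:bijection}: each of the two topologies in question is determined by its coherent class of ``local rings'' $\sR_\tau$, and once one chooses a defining set of coherent sentences, the proof of that proposition explicitly describes a generating family of coverings --- one family $\sU_\phi$ for each sentence $\phi$.

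For part~(1), the class $\sR_\Zar$ is standardly the class of local rings (one direction: every Zariski cover of $X$ admits a lift from $\Spec R$ by sending the closed point to an open containing its image and invoking locality; the converse is obtained by testing against $\{D(a), D(1-a)\} \to \Spec R$). By Example~\ref{exam:sentences}(1) this class is cut out by the single coherent sentence $\forall a;\ \exists b; ab-1=0 \vee \exists c; (1-a)c-1=0$, and the construction in Proposition~\ref{prop:bijection} associates to it exactly the family $\{\Spec\ \ZZ[a,b]/(ab-1) \to \Spec\ \ZZ[a],\ \Spec\ \ZZ[a,c]/((1-a)c-1) \to \Spec\ \ZZ[a]\}$, which is $\{\AA^1 \setminus \{0\} \to \AA^1,\ \AA^1 \setminus \{1\} \to \AA^1\}$.

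Part~(2) is formally analogous once one identifies $\sR_{\rh}$ with the class of valuation rings. Example~\ref{exam:sentences}(3) defines valuation rings by two coherent sentences --- integrality and divisibility --- whose associated families, after eliminating the bound variables $b = ac$ and $a = bd$ respectively, are precisely the two families displayed in the statement.

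The substantive step is therefore the identification $\sR_{\rh} = \{\text{valuation rings}\}$. For the inclusion ``$\supseteq$'', I reduce to generators of the rh-topology: Zariski covers lift since every valuation ring is local, and for an abstract blowup cover $\{\tilde X \to X,\ Z \hookrightarrow X\}$ and a morphism $\Spec R \to X$ not factoring through $Z$, integrality of $R$ places $\Spec\ \Frac(R)$ in $X \setminus Z \cong \tilde X \setminus E$, whence the valuative criterion of properness extends this to a lift $\Spec R \to \tilde X$. For ``$\subseteq$'', I test $R \in \sR_{\rh}$ against specific rh-covers to recover each valuation-ring axiom: the reduction cover $\{X_{\red} \to X\}$ forces $R$ to be reduced; the Zariski cover of part~(1) forces $R$ to be local; the abstract blowup of $V(xy) \subset \AA^2$ by its two axes forces $R$ to be an integral domain; and the blowup of $\AA^2$ at the origin, Zariski-refined by its two affine charts and using locality of $R$ to single one out, forces the divisibility axiom. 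This identifies $\sR_{\rh}$ with valuation rings and, combined with the paragraph above, closes the argument. The main obstacle throughout is the ``$\supseteq$'' direction of the identification of $\sR_{\rh}$, which is precisely the valuative criterion of properness applied along the generic point.
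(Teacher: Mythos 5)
Your proposal is correct and is precisely the intended argument: the paper states this observation as a direct application of Proposition~\ref{prop:bijection}, taking $\sR_\Zar=\{\text{local rings}\}$ and $\sR_{\rh}=\{\text{valuation rings}\}$ together with the defining coherent sentences of Example~\ref{exam:sentences}(1)--(3), and reading off the associated families $\sU_\phi$ from the proof of that proposition. The only substantive input you supply beyond the paper's displayed machinery --- the identification of $\sR_{\rh}$ with valuation rings via the valuative criterion and the axis/blowup test covers --- is exactly the standard fact the paper is implicitly invoking (cf.\ \cite{GK15}).
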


Now a more serious observation.

\begin{obse} \label{obse:red}
A finitary affine topology $τ$ on $\Sch_\ZZ$ has $F(X) = F(X_{\red})$ for every sheaf $F$ and $X \in \Sch_\ZZ$ if and only if every $R \in R_{τ}$ is reduced.
\end{obse}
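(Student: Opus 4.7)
The plan is to use Proposition~\ref{prop:bijection} to translate the sheaf-theoretic hypothesis on $\tau$ into a lifting condition on the rings in $\sR_\tau$, and then identify that lifting condition with reducedness.

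First I would show that ``$F(X) = F(X_{\red})$ for every sheaf $F$ and every $X \in \Sch_\ZZ$'' is equivalent to ``$\{X_{\red} \to X\}$ is a $\tau$-covering family for every $X$''. The forward implication is concrete: since $X_{\red} \to X$ is a monomorphism, $X_{\red} \times_X X_{\red} = X_{\red}$, so the sheaf equalizer for this singleton cover collapses and forces $F(X) \cong F(X_{\red})$. Conversely, the sheaf Yoneda lemma $\hom_{\Shv_\tau}(a_\tau \hom(-, Y), F) = F(Y)$ shows that $F(X) \xrightarrow{\sim} F(X_{\red})$ for every sheaf $F$ is equivalent to $a_\tau \hom(-, X_{\red}) \to a_\tau \hom(-, X)$ being an isomorphism, which (by the sieve characterization recalled in the Conventions) is exactly the statement that $\{X_{\red} \to X\}$ is a $\tau$-covering family.

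Next, by Proposition~\ref{prop:bijection} we have $\tau = \tau_{\sR_\tau}$, so ``$\{X_{\red} \to X\}$ is a $\tau$-cover for every $X$'' is equivalent to $\local{R}{\{X_{\red} \to X\}}$ holding for every $R \in \sR_\tau$ and every $X \in \Sch_\ZZ$. Since $X_{\red} \hookrightarrow X$ is a monomorphism, the induced hom-set map $\hom(\Spec R, X_{\red}) \to \hom(\Spec R, X)$ is automatically injective, so $\local{R}{\{X_{\red} \to X\}}$ simply says that every morphism $\Spec R \to X$ factors through $X_{\red}$.

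Finally I would check that this factorization property (ranging over all $X$) is equivalent to $R$ being reduced. For the easy direction, if $R$ is reduced, then the pullback of the nilpotent ideal sheaf $\mathcal{N}_X \subset \mathcal{O}_X$ along any $f : \Spec R \to X$ is a nilpotent ideal sheaf on $\Spec R$, hence zero; so $f$ factors through $X_{\red}$. For the converse, I would specialize to $X = \Spec(\ZZ[t]/(t^n))$, whose reduction is $\Spec \ZZ$: since $\hom(\Spec R, X) = \{r \in R : r^n = 0\}$ and $\hom(\Spec R, X_{\red})$ is a single point, the factorization property forces every $r \in R$ with $r^n = 0$ to vanish; letting $n$ vary shows $R$ has no non-zero nilpotents. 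Chaining the three equivalences yields the observation. The only step requiring a little care is the first reformulation; everything else is essentially formal once Proposition~\ref{prop:bijection} is in hand.
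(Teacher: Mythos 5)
Your proof is correct, and since the paper states this as an \emph{observation} without supplying a proof, there is no in-text argument to compare it against; your three-step chain (reformulating $F(X)=F(X_{\red})$ as ``$\{X_{\red}\to X\}$ is a $\tau$-cover,'' transporting this via $\tau=\tau_{\sR_\tau}$ from Proposition~\ref{prop:bijection} to the pointwise lifting condition, and recognizing that lifting condition as reducedness via the test schemes $\Spec\ZZ[t]/(t^n)$) is exactly the argument the observation is implicitly relying on. Both the use of the finitary hypothesis (needed for Proposition~\ref{prop:bijection} via Deligne's theorem) and the affine hypothesis are correctly accounted for, and the one place where some care was needed --- passing from ``$F(X)\xrightarrow{\sim}F(X_{\red})$ for all sheaves'' to ``$a_\tau\hom(-,X_{\red})\to a_\tau\hom(-,X)$ is an isomorphism'' and then to the covering-sieve criterion recalled in the Conventions --- is handled cleanly using that $X_{\red}\hookrightarrow X$ is a monomorphism so the generated sieve is $\hom(-,X_{\red})$ itself.
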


Hence, if we want a ``cdh-topology'' which can see nilpotents, the obvious choice is $τ_\sR$ for some class of nonreduced rings $\sR$ containing henselian valuation rings.


\section{Non-reduced valuation rings; definition}

Recall the following.

\begin{lemm} \label{lemm:valuRingDef}
Suppose that $R$ is an integral domain. The following conditions are equivalent.
\begin{enumerate}
 \item The poset of ideals of $R$ is totally ordered.
 \item Every finitely generated ideal of $R$ is principal, and $R$ is local.
 \item For every $x, y \in R$ either $x$ divides $y$ or $y$ divides $x$.
\end{enumerate}
\end{lemm}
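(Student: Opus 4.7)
The plan is to prove the cyclic chain of implications $(1) \Rightarrow (3) \Rightarrow (2) \Rightarrow (1)$, since each step is short and uses only elementary ring-theoretic manipulation.

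First, $(1) \Rightarrow (3)$. Given $x, y \in R$, the principal ideals $(x)$ and $(y)$ are comparable by hypothesis, so either $(x) \subseteq (y)$ (in which case $y \mid x$) or $(y) \subseteq (x)$ (in which case $x \mid y$).

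Next, $(3) \Rightarrow (2)$. For the finitely generated ideal claim I would induct on the number of generators: the base case is $(x,y)$, and by $(3)$ one of $x, y$ divides the other, so $(x,y)$ equals one of $(x)$ or $(y)$. For locality, I would verify that the set $\m$ of non-units is an ideal. Stability under multiplication by $R$ is immediate (if $rx$ were a unit then $x$ would be a unit). For closure under addition, given non-units $x, y$, by $(3)$ one divides the other, say $y = xz$, so $x + y = x(1 + z)$; if this were a unit then $x$ would be a unit, a contradiction. Hence $\m$ is an ideal and $R$ is local.

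Finally, $(2) \Rightarrow (1)$. Let $I, J$ be two ideals and suppose for contradiction that neither contains the other; pick $x \in I \setminus J$ and $y \in J \setminus I$. By hypothesis $(x, y) = (z)$ for some $z \in R$, so write $x = az$, $y = bz$, and $z = cx + dy$ for some $a, b, c, d \in R$. Substituting gives $z(1 - ca - db) = 0$; since $R$ is a domain, $ca + db = 1$, and since $R$ is local one of $ca$, $db$ must be a unit (as the non-units form an ideal). If $ca$ is a unit then $a$ is a unit, hence $(z) = (x)$, forcing $y \in (x) \subseteq I$, a contradiction; the case $db$ a unit is symmetric. Therefore the ideals of $R$ are totally ordered.

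The only even mildly delicate step is the last one, where one must remember to use both the domain hypothesis (to cancel $z$) and locality (to extract a unit from the sum $ca + db = 1$); the rest is routine.
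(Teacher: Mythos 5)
Your proof is correct, and it traverses the implications in the opposite cyclic order from the paper: you do $(1)\Rightarrow(3)\Rightarrow(2)\Rightarrow(1)$ while the paper does $(1)\Rightarrow(2)\Rightarrow(3)\Rightarrow(1)$. The more interesting difference is in where the integral-domain hypothesis enters. In your $(2)\Rightarrow(1)$ step you write $z(1-ca-db)=0$ and cancel $z$ using that $R$ is a domain (then extract a unit from $ca+db=1$ via locality). The paper, in its analogous step $(2)\Rightarrow(3)$, instead arrives at $(1-ca)x=cby$ and branches directly on whether $ca$ or $1-ca$ is a unit, never cancelling anything and hence never invoking that $R$ has no zero divisors. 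This is deliberate: immediately after the proof the paper observes that the domain hypothesis was never used and promotes the same three equivalent conditions to the definition of a \emph{chain ring} for arbitrary rings. Your argument, as written, would not carry through verbatim in that generality; though it can be repaired (from $z(1-ca-db)=0$ and locality, either $1-ca-db$ is a unit, forcing $z=0$ and hence the trivial contradiction $x=0\in J$, or $ca+db$ is a unit, and then one of $ca$, $db$ is a unit as before). So your proof is fine for the stated lemma, but the paper's version buys the non-reduced generalisation for free, which is the whole point of the section that follows.
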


\begin{proof}
$(1) \Rightarrow (2)$. For locality, notice that since the ideals are totally ordered, the union of the non-proper ideals is again a non-proper ideal, necessarily maximal.
For principality, suppose that $I$ is an ideal with $n$ generators $f_1, \dots, f_n$. Since the poset of ideals is totally ordered, either $(f_n) \subseteq (f_{n-1})$ or $(f_{n-1}) \subseteq (f_n)$. Hence, $I$ can be generated by $n-1$ elements. By induction on $n$, the ideal $I$ is principal.

$(2) \Rightarrow (3)$. Suppose $I$, $J$ are two ideals, principal by assumption, so $I = (x)$ and $J = (y)$. Again by assumption $(x, y)$ is principal, so there exists $z$ with $(x, y) = (z)$. That is, there exist $a, b, c, d$ with $z = ax + by$ and $x = cz$, $y = dz$. Substituting the former into the latter gives $(1-ca)x = cby$. Since $R$ is local, either $ca$ is a unit, or $1-ca$ is a unit. If $ca$ is a unit then $c$ is a unit so $(y) \subseteq (x, y) = (z) = (cz) = (x)$. That is, $x$ divides $y$. If $1-ca$ is a unit then $(x) = ((1-ca)x) = (cby) \subseteq (y)$. That is, $y$ divides $x$.

$(3) \Rightarrow (1)$. If there exist ideals $I$, $J$ with $I \not\subseteq J$ and $J \not\subseteq I$, then there exist $x, y$ with $x \in I$, $y \in J$, $x \notin J$, $y \notin I$. But then we cannot have $x$ divides $y$ or $y$ divides $x$.
\end{proof}

\begin{defi}
An integral domain satisfying the  equivalent conditions of Lemma~\ref{lemm:valuRingDef} is known as a \emph{valuation ring}. Noetherian valuation rings are called \emph{discrete valuation rings}.
\end{defi}

\begin{rema}
The term ``valuation'' comes from the fact that the abelian group $\Gamma := (\Frac R)^* / R^*$ has a canonical total ordering, induced by the divisibility relation. The canonical map $v: R \setminus \{0\} \to \Gamma$ is called the \emph{valuation} of $R$. If $R$ is Noetherian we necessarily have $\Gamma \cong (\ZZ, +)$ via a choice of generator for the maximal ideal $\m$.
\end{rema}

Now observe that in the proof of Lemma~\ref{lemm:valuRingDef}, the assumption that $R$ was an integral domain was never used.

\begin{defi}
A ring (not necessarily an integral domain) satisfying the  equivalent conditions of Lemma~\ref{lemm:valuRingDef} is known as a \emph{chain ring}.
\end{defi}

\begin{exam}\ 
\begin{enumerate}
 \item Every valuation ring is a chain ring.
 \item Every localisation of a chain ring is a chain ring.
 \item Every quotient of a chain ring is a chain ring.
 \item Conversely, one can show that every Noetherian chain ring is a field, or a quotient of a discrete valuation ring, Prop.\ref{prop:NoethClass}.
 \item It is also possible to show that if $R$ is a chain ring over $\FF_p$ and the Frobenius is surjective, then $R$ is a quotient of a (perfect) valuation ring, Prop.\ref{prop:chainQuot}.
\end{enumerate}
\end{exam}

\begin{rema}
We do not know examples of any chain rings which are not quotients of valuation rings. One may conjecture that every chain ring is a quotient of a valuation ring.
\end{rema}

\begin{ques}
Is every chain ring the quotient of a valuation ring?
\end{ques}

Clearly every chain ring which is an integral domain is a valuation ring (by definition). But in fact it suffices to be reduced. This follows from the more general fact that the minimal prime is the nilradical.

\begin{lemm} \label{lemm:redChainVal}
Suppose that $R$ is a chain ring. Then $R$ has a unique minimal prime, and it is the nilradical of $R$. Consequently, if $R$ is a reduced chain ring, then $R$ is a valuation ring.
\end{lemm}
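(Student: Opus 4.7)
The plan is to deduce everything from the fact that, for a chain ring $R$, the set of prime ideals forms a chain under inclusion (being a subposet of the totally ordered poset of \emph{all} ideals). Once we know this, the classical observation that the intersection of a chain of primes is itself a prime will do all the work: it tells us that the nilradical $\mathfrak{n} = \bigcap_{\mathfrak{p} \in \Spec R} \mathfrak{p}$ is prime, hence is the unique minimal prime of $R$.

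First I would spell out the totally-ordered-chain-of-primes fact. Given a totally ordered family $\{\mathfrak{p}_\alpha\}$ of primes with intersection $\mathfrak{p} = \bigcap_\alpha \mathfrak{p}_\alpha$, suppose $xy \in \mathfrak{p}$ and $x \notin \mathfrak{p}$. Then $x \notin \mathfrak{p}_{\alpha_0}$ for some index $\alpha_0$, and primality of $\mathfrak{p}_{\alpha_0}$ gives $y \in \mathfrak{p}_{\alpha_0}$. For any other $\mathfrak{p}_\beta$, either $\mathfrak{p}_{\alpha_0} \subseteq \mathfrak{p}_\beta$ (whence $y \in \mathfrak{p}_\beta$ immediately) or $\mathfrak{p}_\beta \subseteq \mathfrak{p}_{\alpha_0}$ (whence $x \notin \mathfrak{p}_\beta$, so $y \in \mathfrak{p}_\beta$ by primality). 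Either way $y \in \mathfrak{p}$, confirming $\mathfrak{p}$ is prime.

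Combining the two observations, $\mathfrak{n}$ is a prime ideal. Since $\mathfrak{n}$ is contained in every prime of $R$ (it is the nilradical), this prime is the unique minimal prime, as claimed. For the consequence, if in addition $R$ is reduced then $\mathfrak{n} = (0)$, so $(0)$ is prime and $R$ is an integral domain. A chain ring which is an integral domain is, by the very definition following Lemma~\ref{lemm:valuRingDef}, a valuation ring.

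No real obstacle is expected; the only thing to keep an eye on is the trivial case $R = 0$, where $\Spec R = \varnothing$ and one conventionally sets $\mathfrak{n} = R$. Since the statement is vacuous there, it can be dispatched in a parenthetical remark.
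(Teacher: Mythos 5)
Your proof is correct and follows the same approach as the paper's: reduce to showing that the intersection of the totally ordered family of primes is itself prime, which makes the nilradical the unique minimal prime. If anything, your case analysis on $\mathfrak{p}_\beta$ versus $\mathfrak{p}_{\alpha_0}$ is spelled out slightly more fully than the paper's, which only explicitly treats primes contained in the witness $\mathfrak{p}_{\alpha_0}$.
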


\begin{rema}
In particular, this shows that chain rings $R$ have the curious property that the nilradical $Nil(R)$ is the unique minimal prime ideal $\n$, and the set of zero divisors $ZD(R)$ is the unique maximal ideal $\m$.
\[ Nil(R) = \n, \qquad ZD(R) = \m. \]
\end{rema}

\begin{proof}
First recall that the nilradical $\n$ is the intersection of all prime ideals. Since the ideals in a chain ring are totally ordered, we are reduced to showing that this intersection is again a prime ideal. Suppose that $ab \in \n$. For all primes $\p$, either $a \in \p$ or $b \in \p$. If both $a, b$ are in all primes, then $a, b \in \n$. If there is a prime which does not contain, say $a$, then all smaller primes do not contain $a$, so all smaller primes must contain $b$. So $b = \cap \p = \n$.

For the ``consequently'' part, if $R$ is a reduced chain ring, $\n = (0)$, i.e., $(0)$ is prime, so $R$ is an integral domain.
\end{proof}

The following lemma may or may not be useful.

\begin{lemm}
Let $A \neq 0$ be a zero dimensional chain ring. Then $A$ is a colimit (even union) of local Artin rings (but not necessarily Artin chain rings).
\end{lemm}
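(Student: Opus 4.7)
The plan is to exhibit $A$ as the filtered union of the subrings $B_\p \subseteq A$, where $B$ ranges over finitely generated $\ZZ$-subalgebras of $A$ and $\p := B \cap \m$. Here $\m$ denotes the unique maximal ideal of $A$, which exists because every chain ring is local; zero-dimensionality further implies that $\m$ is the only prime of $A$, and hence equals the nilradical by Lemma~\ref{lemm:redChainVal}.

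The main step will be to verify that each $B_\p$ is a local Artin subring of $A$. I would first argue that $\p$ is simultaneously prime and nilpotent in $B$: every element of $\p = B \cap \m$ is nilpotent since $\m = \mathrm{Nil}(A)$, and conversely any nilpotent of $B$ lies in $\m$, so $\p = \mathrm{Nil}(B)$ is a prime, and hence the unique minimal prime of the Noetherian ring $B$. Since elements of $B \setminus \p$ lie outside $\m$, they are units in $A$, so $B \hookrightarrow A$ extends uniquely to an embedding $B_\p \hookrightarrow A$. The ring $B_\p$ is then local and Noetherian with maximal ideal $\p B_\p$; because $B$ is Noetherian, $\p$ is finitely generated with nilpotent generators, which forces $\p^n = 0$ in $B$ for some $n$ and therefore $(\p B_\p)^n = 0$. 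A Noetherian local ring with nilpotent maximal ideal is Artin, completing this step.

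Finally, to see that these subrings form a directed system with union $A$: any finite subset $\{x_1, \dots, x_m\} \subseteq A$ lies in $B_\p$ for $B = \ZZ[x_1, \dots, x_m]$, giving the exhaustion, and given two finitely generated subrings $B, B'$ of $A$, the subring $B''$ they jointly generate yields compatible embeddings $B_\p, B'_{\p'} \hookrightarrow B''_{\p''}$ inside $A$ because $B \cap \m = B \cap (B'' \cap \m)$. I do not foresee a substantial obstacle; the one point worth flagging is that the $B$'s themselves will typically have positive Krull dimension (for instance if $A = k[t]/(t^2)$ with $k/\ZZ$ of positive transcendence degree), so the localization at $\p$ is essential, and this is also why the resulting Artin subrings need not be chain rings, matching the parenthetical in the statement.
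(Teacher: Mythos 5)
Your proof is correct and follows essentially the same route as the paper: both take a finitely generated $\ZZ$-subalgebra determined by a finite subset, localize at the contraction of $\m$, and observe that the result is a Noetherian local ring with finitely generated nilpotent maximal ideal, hence Artin. The only cosmetic difference is that the paper localizes the polynomial ring $\ZZ[t_1,\dots,t_n]$ and then quotients by the kernel of the map to $A$, whereas you first pass to the image $B\subseteq A$ and then localize — the resulting Artin subring of $A$ is the same, and your version also spells out the directedness of the system, which the paper leaves implicit.
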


\begin{proof}
Consider any finite set of elements $a_1, \dots, a_n \subseteq A$ and the induced morphism $\ZZ[t_1, \dots, t_n] \to A$. Let $\p \subseteq \ZZ[t_1, \dots, t_n]$ be the preimage of the unique prime of $A$. Let $\OO = \ZZ[t_1, \dots, t_n]_\p$ be the localisation at $\p$ so we now have a local morphism of local rings $\OO \to A$. Let $\m_\OO$ be the maximal ideal of $\OO$, and $f_1, \dots, f_m$ a finite set of generators. They are sent inside the maximal ideal $\m_A$ of $A$, as $\OO \to A$ is a local homomorphism. But $\m_A$ consists of nilpotents. So there is some integer $N$ such that $\OO \to A$ factors as $\OO / \m_\OO^N \to A$. So now we have a morphism from an Artin ring whose image contains all the original $a_1, \dots, a_n$. Since $R$ is non-zero, the kernel of $\OO / \m_\OO^N \to A$ is contained in $\m_\OO$, and quotienting by this, we obtain an injective morphism $\OO / ker \subseteq A$ from an Artin ring $A_0 := \OO / ker$.
\end{proof}

\section{Chain rings as local rings}

\begin{defi}
Let $τ_{\chain}$ be the topology such that $\sR_{τ_{\chain}}$ is the class of chain rings. Let's call this the \emph{chain topology}.
\end{defi}

In the spirit of Observation~\ref{obse:cute} we have:

\begin{obse}
The chain ring topology on $\Aff_\ZZ$ is generated by the family  \[ \{ \Spec\ \ZZ[x, \tfrac{y}{x}] \to \Spec\ \ZZ[x,y], \quad \Spec\ \ZZ[\tfrac{x}{y}, y] \to \Spec\ \ZZ[x,y] \}. \]
\end{obse}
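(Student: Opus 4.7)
The plan is to combine Lemma~\ref{lemm:valuRingDef} with the constructive half of the proof of Proposition~\ref{prop:bijection}.

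First I would observe that, by condition~(3) of Lemma~\ref{lemm:valuRingDef} (which as noted does not require an integral domain hypothesis), the class of chain rings is cut out by a single coherent sentence, namely
\[
\mathtt{\forall x, y \in R,\quad \exists z; y - xz = 0\ \vee\ \exists w; x - yw = 0.}
\]
Thus the chain topology is $\tau_{\sR}$ for $\sR$ defined by this one sentence $\phi$.

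Next I would apply the recipe from the proof of Proposition~\ref{prop:bijection} which associates to a coherent sentence its covering family $\sU_\phi$. For the sentence above we have $n=2$, no $f_i$, $J = 2$ disjuncts, each with a single existential variable ($m_1 = m_2 = 1$) and a single equation ($K_1 = K_2 = 1$), with $g_{11} = y - xz$ and $g_{21} = x - yw$. So
\[
\sU_\phi = \bigl\{\Spec \tfrac{\ZZ[x,y,z]}{(y-xz)} \to \Spec \ZZ[x,y],\ \Spec \tfrac{\ZZ[x,y,w]}{(x-yw)} \to \Spec \ZZ[x,y] \bigr\}.
\]
Eliminating $y$ in the first ring via $y = xz$, and $x$ in the second via $x = yw$, identifies these quotients with the subrings $\ZZ[x, y/x]$ and $\ZZ[x/y, y]$ of $\ZZ(x,y)$, so $\sU_\phi$ is exactly the family in the statement.

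Finally I would invoke the concrete description of $\tau_\sR$-coverings established in the proof of Proposition~\ref{prop:bijection}: the coverings of $\tau_{\sR}$ are precisely those families refinable by a composition of pullbacks of the $\sU_{\phi_\gamma}$ associated to a set of defining sentences. With the single sentence above serving as the defining set, this is exactly the statement that the chain topology on $\Aff_\ZZ$ is generated by the displayed family. There is no genuine obstacle; the result is essentially the composition of Lemma~\ref{lemm:valuRingDef}(3) with Proposition~\ref{prop:bijection}, and the only bookkeeping is the identification of the two quotient rings with $\ZZ[x,y/x]$ and $\ZZ[x/y,y]$.
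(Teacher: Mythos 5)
Your proposal is correct and matches the paper's intended approach: the observation is presented as an application of Proposition~\ref{prop:bijection}, in the spirit of Observation~\ref{obse:cute}, and you have carried out exactly that, using Lemma~\ref{lemm:valuRingDef}(3) to write the single defining coherent sentence and reading off the associated family $\sU_\phi$ as $\{\Spec\ \ZZ[x,\tfrac{y}{x}] \to \Spec\ \ZZ[x,y],\ \Spec\ \ZZ[\tfrac{x}{y},y] \to \Spec\ \ZZ[x,y]\}$.
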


In fact, the chain ring topology can also be described in terms of smooth blowup squares.

\begin{prop} \label{prop:equivChain}
Let $\sU = \{Y_i \to X\}_{i \in I}$ be a family of morphisms in $\Sch_\ZZ$. The following are equivalent. 
\begin{enumerate}
 \item For every chain ring $R$ the morphism
 \[ \amalg \hom(\Spec(R), Y_i) \to \hom(\Spec(R), X) \]
 is surjective. That is, $\sU$ is a covering for the chain topology.

 \item $\sU$ is refinable by a composition of pullbacks of families of the form:
 \begin{enumerate}
  \item Zariski coverings,
  \item smooth blowups, i.e, families of the form 
  \[ \{ Z \to X, Bl_{X} Z \to X \}. \] 
  where $Z$ and $X$ are smooth.
  \end{enumerate}

 \item $\sU$ is refinable by a composition of pullbacks of families of the form:
 \begin{enumerate}
  \item Zariski coverings,
  \item the families 
  \[ \{\{0\} \to \AA^n, Bl_{\AA^n} \{0\} \to \AA^n \}, \] 
  for $n \geq 2$.
  \end{enumerate}

 \item $\sU$ is refinable by a composition of pullbacks of families of the form:
 \begin{enumerate}
  \item Zariski coverings,
  \item the family 
  \[ \{\{0\} \to \AA^2, Bl_{\AA^2} \{0\} \to \AA^2 \}. \] 
  \end{enumerate}
\end{enumerate}
\end{prop}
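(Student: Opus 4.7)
The implications $(4) \Rightarrow (3) \Rightarrow (2)$ are purely formal: the family in $(4)$ is the $n=2$ case of the families in $(3)$, and each family in $(3)$ is a smooth blowup in the sense of $(2)$, so the topologies they generate are ordered accordingly. It therefore remains to prove $(2) \Rightarrow (1)$ and $(1) \Rightarrow (4)$.

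For $(2) \Rightarrow (1)$, I show that every smooth blowup family $\{Z \to X,\ Bl_X Z \to X\}$ lifts against every chain ring. Given $f: \Spec R \to X$ with $R$ a chain ring, locality of $R$ (Lemma~\ref{lemm:valuRingDef}) lets me Zariski-localize on $X$ to the case that $Z \hookrightarrow X$ is a regular immersion cut out by a regular sequence $(f_1,\dots,f_c)$. Set $a_i := f^\sharp(f_i) \in R$. If all $a_i=0$, then $f$ factors through $Z$. Otherwise $(a_1,\dots,a_c) \subseteq R$ is a nonzero finitely generated ideal of a chain ring, hence principal by Lemma~\ref{lemm:valuRingDef}; pick $j$ with $(a_j)=(a_1,\dots,a_c)$ and write $a_i = b_i a_j$. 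On the $j$-th standard affine chart $U_j = \Spec\, \OO_X[t_i : i \neq j]/(f_i - t_i f_j)$ of $Bl_X Z$, the assignment $t_i \mapsto b_i$ is consistent with the relations and produces the desired lift $\Spec R \to U_j$.

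For $(1) \Rightarrow (4)$, I apply Proposition~\ref{prop:bijection}. Let $\tau$ denote the topology generated by the families in $(4)$. Both $\tau$ and $\tau_{\chain}$ are finitary and affine: the first because $Bl_{\AA^2}\{0\}$ is covered by its two standard affine charts, so the generator admits an affine refinement; the second by the construction of Proposition~\ref{prop:bijection} applied to the coherent class of chain rings. It thus suffices to show $\sR_\tau$ equals the class of chain rings. Lifting against Zariski covers characterises local rings (Example~\ref{exam:sentences}(1)), while lifting against $\{\{0\} \to \AA^2,\ Bl_{\AA^2}\{0\} \to \AA^2\}$ unfolds, via the same chart analysis as in the previous paragraph, into the condition: for all $a, b \in R$, either $a \mid b$ or $b \mid a$. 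By Lemma~\ref{lemm:valuRingDef}(3) this is exactly the chain ring axiom (and in fact already implies locality, since distinct maximal ideals each contain an element the other lacks).

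The main obstacle is the chart-level argument in $(2) \Rightarrow (1)$: one must verify that for a regular immersion defined by $(f_1,\dots,f_c)$, the blowup $Bl_X Z = \underline{\mathrm{Proj}}_X(\bigoplus_n \sI^n)$ really is covered by the affines $U_j$ above (the Rees algebra being locally a quotient of $\OO_X[T_1,\dots,T_c]$ by the standard Koszul-type relations on a regular sequence), and that the ring map $t_i \mapsto b_i$ is well-defined in the presence of nilpotents in $R$; this is fine since the defining relation $f_i - t_i f_j$ maps to $a_i - b_i a_j = 0$. All other steps are direct applications of Proposition~\ref{prop:bijection} and Lemma~\ref{lemm:valuRingDef}.
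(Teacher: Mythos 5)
Your proof is correct and follows the same overall plan as the paper: close a cycle of implications by establishing $(2)\Rightarrow(1)$ and $(1)\Rightarrow(4)$, the others being formal. The difference is local. For $(2)\Rightarrow(1)$, the paper uses smoothness to produce an \'etale chart $U\to\AA^d$ carrying $Z\cap U$ to a coordinate subspace, then pulls back $Bl_{\AA^c}\{0\}$; you instead Zariski-localize to a regular sequence $(f_1,\dots,f_c)$ cutting out $Z$ and read the lifting criterion directly off the Rees-algebra presentation $Bl_X Z\cap U_j=\Spec\,\OO_X[t_i]/(f_i-t_if_j)$. Both arguments boil down to the same final move --- use that a finitely generated ideal of a chain ring is generated by one of the given generators (Lemma~\ref{lemm:valuRingDef}) to pick the chart $U_j$ --- but your version never leaves the blowup picture and needs only that the immersion is regular, making the smoothness hypothesis visibly stronger than necessary for this direction. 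For $(1)\Rightarrow(4)$, you make explicit what the paper treats as immediate: via Proposition~\ref{prop:bijection}, identifying the local rings of the topology in $(4)$ with the chain rings. The only thing to be careful about there is that the ``unfolding'' of the blowup family into the total-divisibility axiom presupposes locality of $R$ (one needs $\Spec R$ to factor through a single affine chart of $Bl_{\AA^2}\{0\}$, which can fail for non-local rings even when the map to the blowup exists); since you first obtain locality from the Zariski covers, the argument is fine, but the parenthetical remark suggesting the blowup axiom alone forces locality should be read as a property of the divisibility condition, not of the lifting condition.
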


\begin{proof}
Clearly $(1) \iff (4) \implies (3) \implies (2)$, so it suffices to show that the families $\{ Z \to X, Bl_{X} Z \to X \}$ in (2) satisfy (1). Suppose that $R$ is a chain ring and $\Spec(R) \to X$ is any morphism. We will show that $\Spec(R) \to X$ factors through $Bl_X Z$. Let $x \in X$ be the image of the closed point of $\Spec(R)$. Since $X$ and $Z$ are smooth, there exists an open affine neighbourhood $U$ of $x$, and an étale morphism $f: U \to \AA^d$ such that $Z \cap U = f^{-1}\AA^{d-c}$ where $d$ is the dimension of $X$ and $c$ the local codimension of $Z$. Replacing $X$ with $U$, we can assume the morphism is $f: X \to \AA^d$. Since it is étale, it is flat, so $Bl_{X}Z$ is the pullback of $Bl_{\AA^d} \AA^{d-c}$. So in fact we can assume $X = \AA^d$ and $Z = \AA^{d-c}$. But this is the pullback of $Bl_{\AA^{c}} \{0\}$ so we assume $d = c$.

Let $a_1, \dots, a_d$ be the images in $R$ of $x_1, \dots, x_d \in \ZZ[x_1, \dots, x_d]$. 
Since $R$ is a chain ring, there is one $a_i$ which divides the others. So $\Spec(R) \to \Spec\ \ZZ[x_1, \dots, x_d]$ factors through $\Spec\ \ZZ[\tfrac{x_1}{x_i}, \dots, x_i, \dots, \tfrac{x_d}{x_i}]$. But this is one of the standard opens of the blowup $Bl_X Z$. So $\Spec(R) \to X$ factors through $Bl_X Z$.
\end{proof}

An obvious question is if the chain ring topology fits into Voevodsky's theory of (bounded, complete, regular) cd-structures. It does not. 

\begin{prop} \label{prop:cdStruc}
Suppose that we are given a collection $P$ of cartesian squares
\[ \xymatrix{
B \ar[r] \ar[d] & Y \ar[d] \\
A \ar[r]_i & X
} \]
in $\Sch_\ZZ$ such that $i$ is a categorical monomorphism. Let $τ$ be the topology generated by $P$,\footnote{So $τ$-covering families are those which are refinable by composition of pullbacks of families of the form $\{A \to X, Y \to X\}$ for some square in $P$.}, suppose $τ$ is affine, and let $\sR$ be the class of $τ$-local rings. Then the following are equivalent.
\begin{enumerate}
 \item A presheaf $F$ is a sheaf if and only if 
\[ \xymatrix{
F(B) & \ar[l] F(Y)  \\
F(A) \ar[u] & \ar[l] F(X) \ar[u]
} \]
is cartesian for all pullbacks of squares in $P$.

 \item The image of every square of $P$ in $\Shv_τ(\Sch_\ZZ)$ is cocartesian.
 
 \item For every $R \in \sR$ the square
\[ \xymatrix{
\phi_R(B) \ar[r] \ar[d] & \phi_R(Y) \ar[d] \\
\phi_R(A) \ar[r] & \phi_R(X)
} \]
is a cocartesian square of sets.

 \item For every $R \in \sR$ the morphism
 \[ \phi_R(Y) \setminus \phi_R(B) \to \phi_R(X) \setminus \phi_R(A) \]
 is injective.
\end{enumerate}
\end{prop}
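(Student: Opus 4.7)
My plan is to establish the four conditions via the chain of equivalences $(3) \Leftrightarrow (4)$, $(2) \Leftrightarrow (3)$, and $(1) \Leftrightarrow (2)$, handling the more elementary ones first. The starting observation, used throughout, is that since $i \colon A \to X$ is a monomorphism in $\Sch_\ZZ$ and pullbacks preserve monomorphisms, $B \to Y$ is also a monomorphism; moreover, every fibre functor $\phi_R$ preserves finite limits, so $\phi_R(A) \to \phi_R(X)$ and $\phi_R(B) \to \phi_R(Y)$ are injective and $\phi_R(B) = \phi_R(A) \cap \phi_R(Y)$ inside $\phi_R(X)$. For $R \in \sR$, the covering condition additionally gives $\phi_R(A) \cup \phi_R(Y) = \phi_R(X)$. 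The pushout of a span of injections in $\Set$ is the union, so the canonical map $\phi_R(A) \sqcup_{\phi_R(B)} \phi_R(Y) \to \phi_R(X)$ is always surjective, and its injectivity is precisely equivalent to $\phi_R(Y) \setminus \phi_R(B) \to \phi_R(X) \setminus \phi_R(A)$ being injective. This yields $(3) \Leftrightarrow (4)$.

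For $(2) \Leftrightarrow (3)$ I would invoke Lemma~\ref{lemm:fibreRing}, identifying the fibre functors of $\Shv_\tau(\Sch_\ZZ)$ with $\{\phi_R : R \in \sR\}$, together with Deligne's theorem (Theorem~\ref{theo:deligne}), which, since $\tau$ is finitary, implies this family is conservative. As geometric morphisms, fibre functors preserve finite limits and all colimits, so they preserve and jointly reflect cocartesian squares; thus the image of the $P$-square in $\Shv_\tau$ is cocartesian if and only if its image under every $\phi_R$ is cocartesian in $\Set$.

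For $(1) \Leftrightarrow (2)$, the implication $(1) \Rightarrow (2)$ is Yoneda: a square in a topos is cocartesian if and only if applying $\hom(-, F)$ for every object $F$ produces a cartesian square of sets, and $(1)$ says every sheaf makes the square cartesian. For $(2) \Rightarrow (1)$, the direction ``sheaves satisfy cartesian for all pullbacks of $P$-squares'' uses the fact that in any elementary topos a cocartesian square which is also a pullback and has one monomorphism leg is stable under pullback (the van Kampen or adhesive property). Since sheafification preserves finite limits, pullbacks of $P$-squares in $\Sch_\ZZ$ descend to pullbacks in $\Shv_\tau$ of cocartesian squares with a monomorphism leg, which therefore remain cocartesian; every sheaf then sends them to cartesian squares.

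The converse direction of $(2) \Rightarrow (1)$---that a presheaf $F$ satisfying the cartesian property for all pullbacks of $P$-squares is a $\tau$-sheaf---is what I expect to be the main obstacle. Since $\tau$ is generated by the 2-element covers $\{A \to X, Y \to X\}$ in $P$, the \v{C}ech sheaf condition for such a cover reduces, using $A \times_X A = A$ and $A \times_X Y = B$ (both consequences of $i$ being a monomorphism), to the cartesian property on the original $P$-square plus a descent condition along $Y \to Y \times_X Y$. The latter should be extracted by applying the cartesian property to the pullback of the $P$-square along $Y \to X$, and iterating if necessary. Pinning down this \v{C}ech-nerve bookkeeping, and confirming that the required diagonal compatibilities are genuinely encoded by the cartesian property on iterated pullbacks of $P$-squares, is the step I would expect to take the most effort.
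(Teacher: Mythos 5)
Your decompositions of $(3) \Leftrightarrow (4)$ and $(2) \Leftrightarrow (3)$ are exactly those in the paper: the first is the observation that a cartesian square of injections of sets is cocartesian precisely when the ``complement'' map is injective, and the second is conservativity of the fibre functors $\{\phi_R\}_{R \in \sR}$ via Lemma~\ref{lemm:fibreRing} and Deligne's theorem. So far, same approach.

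For $(1) \Leftrightarrow (2)$ you again mirror the paper's intended route: the paper's entire proof of this step is ``Yoneda, colimits being universal in topoi, and the factorisation $B \to (A \sqcup Y) \times_X (A \sqcup Y) \rightrightarrows (A \sqcup Y)$,'' and those are respectively your Yoneda argument for $(1) \Rightarrow (2)$, your adhesivity/van Kampen argument for stability of cocartesians under pullback (universality of colimits), and the \v{C}ech-nerve bookkeeping you flag as the remaining obstacle. The one ingredient you do not name explicitly is the factorisation itself: the map $B = A\times_X Y \to (A \sqcup Y)\times_X(A \sqcup Y)$ exhibits the single fibre product $B$ as a sub-span of the full \v{C}ech double intersection $(A \sqcup Y)\times_X(A \sqcup Y) \cong A \sqcup B \sqcup B \sqcup (Y\times_X Y)$; since $i$ is a monomorphism the $A\times_X A$ component is redundant, which is how the equaliser condition on the double intersection gets compared to the single cartesian square $F(X) = F(A)\times_{F(B)} F(Y)$, leaving exactly the $Y\times_X Y$ descent you identify. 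You have correctly located this as the crux, and the paper's own proof says nothing more than the slogan above, so you have not missed an argument that the paper contains. If you want to close the gap you would still need to justify (or cite) that this ``excision implies \v{C}ech descent'' direction holds here --- this is precisely the completeness/regularity machinery of Voevodsky's cd-structure theory, which the paper leaves implicit.

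Two small remarks. In $(3) \Leftrightarrow (4)$ you write ``the covering condition additionally gives $\phi_R(A) \cup \phi_R(Y) = \phi_R(X)$,'' which is correct and worth keeping, since it is what makes the canonical map $\phi_R(A) \sqcup_{\phi_R(B)} \phi_R(Y) \to \phi_R(X)$ automatically surjective. And in $(2) \Leftrightarrow (3)$, be careful to state that conservativity of a jointly-finite-limit-and-colimit-preserving family detects cocartesianness by testing on the comparison map $A\sqcup_B Y \to X$ (a morphism, so conservativity applies directly), which is the parenthetical hint the paper gives; ``preserve and jointly reflect cocartesian squares'' is a correct consequence but phrased a little loosely.
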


\begin{rema}
Pay attention that the morphism in part (4) is not the morphism $\phi_R(Y \setminus B) \to \phi_R(X \setminus A)$.
\end{rema}

\begin{proof}
(1) $\iff$ (2) is essentially just Yoneda, colimits being universal in topoi, and the factorisation $B \to (A \sqcup Y) \times_X (A \sqcup Y) \rightrightarrows (A \sqcup Y)$.

(2) $\iff$ (3) is the fact discussed above that the $\phi_R$ form a conservative family of fibre functors (consider the morphism $A \sqcup_B Y \to X$ in $\Sch_τ(\Sch_\ZZ)$).

(3) $\iff$ (4) is the fact that a cartesian square of sets with monic horizontal morphisms is cocartesian if and only if the ``complement'' morphism is a monomorphism.
\end{proof}

\begin{coro} \label{coro:autoRH}
Suppose $τ \leq \Zar$ is a finitary topology on $\Sch_\ZZ$ such that the image of 
\[ \xymatrix{
\PP^1 \ar[r] \ar[d] & Bl_{\AA^2} \{0\} \ar[d] \\
\{0\} \ar[r] &  \AA^2
} \]
in $\Shv_τ(\Sch_\ZZ)$ is cocartesian. Then all local rings are valuation rings. In particular, $τ \leq rh$ and all $X_{\red} \to X$ are sent to isomorphisms in $\Shv_τ(\Sch_\ZZ)$.
\end{coro}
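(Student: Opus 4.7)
The plan is to feed the cocartesian hypothesis through the fibre functors $\phi_R$ for $R \in \sR_\tau$. Since $\tau \leq \Zar$, the topology $\tau$ is in particular affine and finitary, so by Deligne's theorem and Lemma~\ref{lemm:fibreRing} the family $\{\phi_R\}_{R \in \sR_\tau}$ is conservative and each such $R$ is a local ring. Because $\phi_R$ preserves finite limits and colimits, the hypothesis gives that for every $R \in \sR_\tau$ the induced square in $\Set$ is simultaneously cartesian and cocartesian, as in (2)$\iff$(3) of Proposition~\ref{prop:cdStruc}. With $\phi_R(\AA^2) = R^2$ and $\phi_R(\{0\}) = \{(0,0)\}$, and with the horizontal arrows injective since closed immersions become monomorphisms under $\phi_R$, this is equivalent to the complement map
\[ \phi_R(Bl_{\AA^2}\{0\}) \setminus \phi_R(\PP^1) \longrightarrow R^2 \setminus \{(0,0)\} \]
being a bijection.

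Surjectivity will give the chain condition, and injectivity will force $R$ to be reduced. For surjectivity, any lift $\Spec(R) \to Bl_{\AA^2}\{0\}$ of $(a, b) \neq (0, 0)$ factors through one of the standard affine charts $U_1 = \Spec\,\ZZ[x, y/x]$ or $U_2 = \Spec\,\ZZ[x/y, y]$ because $R$ is local, yielding either $t \in R$ with $b = at$ or $s \in R$ with $a = bs$. This shows $a \mid b$ or $b \mid a$ for every pair in $R$, which by Lemma~\ref{lemm:valuRingDef} means that $R$ is a chain ring. For injectivity, the decisive test case is the pair $(a, a)$ for a putative nonzero zero-divisor: if $a \neq 0$ and $c \in R \setminus \{0\}$ satisfy $ac = 0$, then the two ring maps $\ZZ[x, y/x] \to R$ sending $(x, y/x)$ to $(a, 1)$ and to $(a, 1+c)$ define two distinct morphisms $\Spec(R) \to U_1 \hookrightarrow Bl_{\AA^2}\{0\}$, both with image $(a, a)$ in $\AA^2$ and neither factoring through $\PP^1$ (that would require $a = 0$). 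This contradicts injectivity, so $R$ has no nonzero zero-divisors, and Lemma~\ref{lemm:redChainVal} then identifies the reduced chain ring $R$ as a valuation ring.

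To conclude, unwinding Observation~\ref{obse:cute}(2) through Proposition~\ref{prop:bijection} identifies $\sR_{\rh}$ as precisely the class of valuation rings: the first generating family imposes the domain condition, the second the chain ring condition. Hence the inclusion $\sR_\tau \subseteq \sR_{\rh}$ just established translates into $\tau \leq \rh$. The ``in particular'' clause about $X_{\red} \to X$ then follows from Observation~\ref{obse:red}, since every $R \in \sR_\tau$ is now reduced. The main obstacle is the injectivity step: one must produce an apt pair of distinct candidate lifts and then verify that they remain distinct after postcomposition with $U_1 \hookrightarrow Bl_{\AA^2}\{0\}$ and that neither lies in $\phi_R(\PP^1)$ — both reducing to $a \neq 0$ together with the monomorphism property of the open immersion $U_1 \hookrightarrow Bl_{\AA^2}\{0\}$.
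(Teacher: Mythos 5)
Your proof is correct and follows essentially the same route as the paper: pass the cocartesian hypothesis through a fibre functor $\phi_R$ for each $\tau$-local ring $R$, conclude that the complement map $\phi_R(Bl_{\AA^2}\{0\}) \setminus \phi_R(\PP^1) \to R^2 \setminus \{(0,0)\}$ is a bijection, extract the chain condition from surjectivity and the ``no nonzero zero-divisors'' condition from injectivity, and then invoke Lemma~\ref{lemm:redChainVal} and Proposition~\ref{prop:bijection}. The only cosmetic differences are that the paper cites Proposition~\ref{prop:equivChain} for the chain-ring step and Proposition~\ref{prop:cdStruc}(4) for the injectivity of $(x,u)\mapsto(x,xu)$ on the locus $x \neq 0$, using the witness pair $(x,0)$, $(x,y)$ rather than your $(a,1)$, $(a,1+c)$.
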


\begin{proof}
First consider the cd-structure which is the union of the Zariski cd-structure, and the square above. By Proposition~\ref{prop:equivChain} the local rings of $τ$ are all chain rings. On the other hand, by Proposition~\ref{prop:cdStruc} the morphism\footnote{This is the composition of one of the standard opens $U \subseteq Bl_{\AA^2} \{0\}$ and the canonical morphism $Bl_{\AA^2} \{0\} \to \AA^2$.}
\[ R \times R \to R \times R; \qquad  (x,y) \mapsto (x, xy) \]
is injective when restricted to the preimage of $R \times R \setminus \{(0,0)\}$. In particular, if there exist $x, y \in R$ with $x \neq 0$ and $xy = 0$, then we must have $y = 0$. That is, there are no nonzero zero divisors. A chain ring without nonzero zerodivisors is a valuation ring by Lemma~\ref{lemm:redChainVal}. Hence $τ \leq rh$ by Proposition~\ref{prop:bijection}.
\end{proof}

\section{Chain rings as quotients of valuation rings}

\begin{prop} \label{prop:NoethClass}
Suppose that $A$ is a Noetherian chain ring. Then either:
\begin{enumerate}
 \item $A$ is a field, 
 \item $A$ is a dvr, 
 \item $A = k[[t]] / (t^n)$ for some field $k$ and some integer $n \geq 1$, or 
 \item $A = \Lambda / (p^n)$ for some complete discrete valuation ring $\Lambda$ whose uniformiser is a prime $p \in \ZZ$ and $n \geq 1$.
\end{enumerate}
\end{prop}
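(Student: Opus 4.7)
The plan is to first extract the ideal-theoretic structure of $A$ and then invoke the Cohen structure theorem in the Artinian case. Being a chain ring, $A$ is local, and being Noetherian, its maximal ideal $\m$ is finitely generated, so by Lemma~\ref{lemm:valuRingDef}(2) it is principal; write $\m = (t)$. Krull's intersection theorem gives $\bigcap_i \m^i = 0$, and combined with the fact that the ideals of $A$ are totally ordered, this forces every nonzero ideal of $A$ to equal $\m^i = (t^i)$ for a unique $i \geq 0$.

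Next I would dichotomise on whether $\m^n = 0$ for some $n \geq 1$. If not, every nonzero element has a well-defined $t$-adic order which is additive on products, so $A$ is an integral domain; by Lemma~\ref{lemm:redChainVal}, $A$ is then a valuation ring, hence a field (case (1), when $\m = 0$) or a dvr (case (2)). Otherwise $A$ is Artinian local with principal maximal ideal, hence complete, and the Cohen structure theorem applies.

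In the equal characteristic case, Cohen supplies a coefficient field $k \hookrightarrow A$; the induced map $k[[x]] \to A$ sending $x \mapsto t$ is surjective, and its kernel is $(x^n)$ where $n$ is minimal with $t^n = 0$, producing case (3). In the mixed characteristic case, Cohen supplies a coefficient ring $\Lambda$ which is a complete dvr with uniformiser $p \in \ZZ$ and with the same residue field as $A$, together with a surjection $\Lambda[[x]] \twoheadrightarrow A$; under the hypothesis that the image of $p$ already generates $\m$, the composite $\Lambda \hookrightarrow \Lambda[[x]] \twoheadrightarrow A$ is surjective with kernel $(p^n)$, producing case (4).

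The main obstacle is this last step. The conclusion $A \cong \Lambda/(p^n)$ in the form stated requires that the image of $p$ generate $\m$, i.e.\ that $A$ be absolutely unramified. Ramified Artinian chain rings such as $\ZZ_p[\pi]/(\pi^{2m+1})$ with $\pi^2 = p$ do not fit this form directly; they instead give presentations $A \cong \Lambda'/(\pi^n)$ for a ramified complete dvr $\Lambda'$ with uniformiser $\pi$, so the classification only becomes uniform once case (4) is read with $\Lambda$ allowed to be an arbitrary complete dvr of mixed characteristic (equivalently, by imposing that the residue field extension $k/\FF_p$ is trivial or otherwise controls the ramification).
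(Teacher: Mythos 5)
Your route to the Artinian case is correct and arguably tidier than the paper's: combining the Krull intersection theorem with the total ordering of ideals to conclude that every nonzero ideal is a power of $\m=(t)$ gives directly that $A$ is either a valuation ring (hence a field or a dvr) or Artinian with nilpotent $\m$, whereas the paper instead argues that either $A$ is a domain or all zero divisors are nilpotent. Both approaches then invoke Cohen's structure theorem.

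The more substantive part of your write-up is the concern raised at the end, and it is a valid one: case (4) is false as stated, and you are right that the difficulty is ramification. Take $\Lambda' = \ZZ_p[x]/(x^2-p)$, a complete dvr totally ramified of degree $2$ over $\ZZ_p$ with uniformiser $\pi$ (the class of $x$), and set $A = \Lambda'/(\pi^3)$. Being a quotient of a dvr, $A$ is a Noetherian chain ring. It has residue field $\F_p$, length $3$ over itself, and $p = \pi^2 \neq 0$ while $p^2 = \pi^4 = 0$, so it has mixed characteristic $(0,p)$; thus it is not a field, not a dvr, and not of the form $k[[t]]/(t^n)$. On the other hand, since $p\pi = \pi^3 = 0$, its additive group is $\ZZ/p^2\oplus\ZZ/p$, which is not cyclic, so $A \not\cong \ZZ/p^3$; but any $\Lambda/(p^n)$ with $\Lambda$ a complete dvr whose uniformiser is $p \in \ZZ$ and whose residue field is $\F_p$ must be $\ZZ_p/(p^n) = \ZZ/p^n$. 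So $A$ fits none of the four listed cases. The paper's final line ``the result then follows from Cohen's structure theorem'' is too hasty: Cohen produces a surjection from $W[[x]]$ for $W$ a Cohen ring, not from $W$ itself, and discarding the power series variable requires $A$ to be absolutely unramified. The natural repair is the one you propose, replacing (4) by ``$A = \Lambda/(\varpi^n)$ for some complete dvr $\Lambda$ of mixed characteristic with uniformiser $\varpi$''; but even this does not follow from Cohen's theorem alone and would need a short additional argument showing that a mixed-characteristic Artinian chain ring is a quotient of a complete dvr.
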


\begin{proof}
Let $A$ be a Noetherian chain ring. Let $t$ be a nonzero divisor, and $e$ a zero divisor. We claim that either $t$ is a unit or $e = 0$. This implies that either:
\begin{enumerate}
 \item all nonzero divisors are units, or
 \item $A$ is an integral domain.
\end{enumerate}

Proof that either $t \in A^*$ or $e = 0$. Let $a \in A$ be such that $at = e$ or $t = ae$. We cannot have $t = ae$ because $t$ is not a zero divisor. So $at = e$. The same is true for $t^n$ in place of $t$ for all $n \geq 1$. So we find a sequence of elements $a_1, a_2, a_3, \dots$, such that $a_nt^n = e$. Notice that $a_nt^n = a_{n+1}t^{n+1}$ so $(a_n - a_{n+1}t)t^n = 0$. Since $t$ is not a zero divisor, we find that $a_n = a_{n+1}t$, or in other words, $(a_n) \subseteq (a_{n+1})$. Since $A$ is Noetherian, this sequence of ideals stabilises. So there is some $n$ such that $a_n$ and $a_{n+1}$ differ by a unit. That is, $a_n = u a_n t $ for some unit $u$. Then $a_n(1 - ut) = 0$. If $t$ is not a unit, then $(1 - ut)$ is a unit, so $a_n = 0$, and therefore $e = a_nt^n = 0$.

If $A$ is a Noetherian integral domain, then it is a field or a dvr, so assume that $A$ is not an integral domain. By the above, this implies all nonzero divisors in $A$ are units. Since $A$ is local, this implies that the maximal ideal---the set of non-units---is exactly the set of zero divisors. Using a similar argument to the above we claim that we can show that all zero divisors are nilpotent.

Proof that all zero divisors are nilpotent. Suppose that $xy = 0$, with $x, y \neq 0$, and that for some $n \in \NN$ we have found 
\[ y_{-1} = 0, \quad y_0 = y,\quad  y_1, \dots, y_n \]
such that $y_n = y_{n+1}x$ for all $n = -1, \dots, n{-}1$. Note that by induction we have $y_ix^i = y$ (it may help to think of $y_i$ as $y / x^{i}$). Now we must have $x|y_n$ or $y_n|x$. In the case $n = 0$, without loss of generality we can assume $y = y_1x$ for some $y_1$. In the case $n > 0$, if $x = z y_n$ then $xx^nx = zy_nx^nx = zyx = 0$ and $y^{n+2} = (y_1x)^{n+2} = 0$, so $x$ and $y$ are nilpotent. If not, then $y_n = y_{n+1}x$ and we continue the sequence. So either $x$ and $y$ are nilpotent, or we can find $y_n$ for all $n \in \NN$ with $y_n = y_{n+1}x$. Since $A$ is Noetherian, the sequence of ideals $\dots \subseteq (y_n) \subseteq (y_{n+1}) \subseteq$ stabilises, and we have $y_{n+1} = zy_n$ for some $n$. Since we also have $y_n = y_{n+1}x$ we find that $y_{n+1}(1-zx) = 0$. Since $x$ is a zero divisor, $zx$ cannot be a unit, so $1-zx$ is a unit, and we find that $y_{n+1} = 0$, implying that $y_{n+1}x^{n+1} = y = 0$, a contradiction.

So all zero divisors are nilpotent. We have seen previously that the set of zero divisors is the maximal ideal, so we deduce that $A / \m = A_{red}$ is a field. Or in other words, $A$ is a dimension zero local ring, or in other words, a local Artin ring. In particular, it is complete. The result then follows from Cohen's structure theorem, [Stacks Project, 0323].
\end{proof}

\begin{lemm}
Suppose $A$ is a chain ring in positive characteristic. Then the colimit perfection $A_{\perf} = \colim(A \stackrel{Frob}{\to} A \stackrel{Frob}{\to} \dots)$ is also a chain ring.
\end{lemm}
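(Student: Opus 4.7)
The plan is to reduce the statement to the general fact that coherent classes of rings are closed under filtered colimits, which was observed in Example~\ref{exam:sentences}(6). First I would recall that, by Lemma~\ref{lemm:valuRingDef}(3), being a chain ring is equivalent to satisfying the sentence
\[ \mathtt{\forall a, b \in R, \qquad \exists c \in R; ac = b \quad \vee \quad \exists d \in R; a = bd}, \]
exactly as in Example~\ref{exam:sentences}(3), except that no integral domain hypothesis is imposed. This is a coherent sentence in the sense of Definition~\ref{defi:coheSent}, so by Definition~\ref{defi:coherentRing} the class of chain rings is a coherent class of rings.

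Next, I would observe that $A_{\perf}$ is by construction a filtered colimit (even a sequential one) of copies of $A$ along the Frobenius, and each term in the colimit is a chain ring by hypothesis. Since coherent classes are closed under filtered colimits, $A_{\perf}$ is a chain ring.

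For readers who prefer to avoid invoking the logical framework, I would point out that the argument unwinds to something entirely concrete: given $x, y \in A_{\perf}$, one picks representatives $a, b \in A$ at a common stage $n$ of the colimit whose images in $A_{\perf}$ are $x$ and $y$ respectively (possible because the colimit is filtered). Applying the chain condition in $A$, there exists $c \in A$ with either $ac = b$ or $a = bc$ in $A$. The canonical map $A \to A_{\perf}$ is a ring homomorphism and hence preserves such divisibility equations, so the image $\overline{c} \in A_{\perf}$ witnesses that either $x \overline{c} = y$ or $x = y \overline{c}$.

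I do not foresee a real obstacle here; the statement is immediate once one recognises that the chain condition is first-order in the elements of the ring and that filtered colimits preserve such conditions. The only mild care needed is to make sure the representatives $a, b$ are chosen at a common stage before applying the chain condition, but this is standard for filtered colimits.
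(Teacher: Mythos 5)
Your proposal is correct, and the concrete unwinding you give in the third paragraph is precisely the paper's one-line proof: any two elements of $A_{\perf}$ come from a common stage $A$, one of them divides the other there, and divisibility is preserved by the structure maps. Your additional framing via coherent classes (Lemma~\ref{lemm:valuRingDef}(3) gives a single coherent sentence defining chain rings, Example~\ref{exam:sentences}(6) gives closure under filtered colimits) is a valid, more conceptual way to package the same observation, and is consistent with the paper's general setup.
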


\begin{proof}
Any two elements $x, y \in A_{\perf}$ are in the image of some $A$, where one divides the other.
\end{proof}

\begin{rema}
The passage $A \mapsto A_{\perf}$ loses all information about nilpotents. Indeed, if $a \in A$ satisfies $a^n = 0$ then there is some $m$ for which $a^{p^m} = 0$. So $A_{\perf}$ is reduced. As we observed above, this implies it is a valuation ring. In fact, it is the colimit perfection of the valuation ring $A_{red}$.
\end{rema}

\begin{prop} \label{prop:chainQuot}
Suppose that $A$ is a chain ring in positive characteristic $p$. The inverse limit perfection
\[ A^\perf = \varprojlim(\dots \stackrel{Frob}{\to} A \stackrel{Frob}{\to} A ) \]
is a valuation ring. In particular, if the Frobenius is surjective, then $A$ is the quotient of a (perfect) valuation ring.
\end{prop}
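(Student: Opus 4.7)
The plan is to verify the two conditions of Lemma~\ref{lemm:redChainVal}, namely that $A^\perf$ is reduced and is a chain ring; by that lemma, these together force $A^\perf$ to be a valuation ring. Reducedness is straightforward: if $\alpha = (a_i)_{i \geq 0} \in A^\perf$ satisfies $\alpha^n = 0$, then $a_i^n = 0$ in $A$ for every $i$, and the identity $a_j = a_{j+k}^{p^k}$ together with the choice $p^k \geq n$ forces $a_j = 0$ for every $j$, so $\alpha = 0$.

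For the chain property, let $\alpha = (a_i), \beta = (b_i) \in A^\perf$. Because $A$ is a chain ring, at each index either $a_i \mid b_i$ or $b_i \mid a_i$. The key observation is that each condition is downward closed in $i$: writing $b_i = c a_i$ gives $b_{i-1} = b_i^p = c^p a_{i-1}$, so $a_{i-1} \mid b_{i-1}$, and similarly for the other divisibility. Since $\NN$ cannot be the union of two proper downward-closed subsets, one of the two sets equals $\NN$; without loss of generality $a_i \mid b_i$ for every $i$. It remains to construct a compatible sequence $\gamma = (c_i) \in A^\perf$ with $c_i a_i = b_i$ for all $i$.

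The construction of $\gamma$ is the main obstacle. For each $i$ pick any $c_i^{(0)} \in A$ with $c_i^{(0)} a_i = b_i$. Then $(c_{i+1}^{(0)})^p a_i = b_i = c_i^{(0)} a_i$, so $\eta_i := (c_{i+1}^{(0)})^p - c_i^{(0)} \in \mathrm{Ann}(a_i)$. The aim is to replace $c_i^{(0)}$ by $c_i := c_i^{(0)} + \delta_i$ with $\delta_i \in \mathrm{Ann}(a_i)$ so that $c_{i+1}^p = c_i$; in characteristic $p$ this reduces to the recursion $\delta_i = \eta_i + \delta_{i+1}^p$, formally solved by $\delta_i = \sum_{j \geq 0} \eta_{i+j}^{p^j}$. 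The hard part will be to show that this formal series makes sense in $A$, i.e.\ that $\eta_{i+j}^{p^j} = 0$ for all sufficiently large $j$, reducing it to a finite sum; this requires using the nilpotence structure of the annihilators $\mathrm{Ann}(a_j)$ in a chain ring, controlled by the identities $a_0 = a_j^{p^j}$ relating annihilators at different levels. Once the chain property is in hand, Lemma~\ref{lemm:redChainVal} yields that $A^\perf$ is a valuation ring; it is moreover perfect since Frobenius on $A^\perf$ has as its inverse the shift $(b_0, b_1, b_2, \ldots) \mapsto (b_1, b_2, b_3, \ldots)$.

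Finally, the ``in particular'' clause is immediate: if Frobenius is surjective on $A$, then starting from any $a \in A$ one can iteratively choose $a_{n+1}$ with $a_{n+1}^p = a_n$ (setting $a_0 = a$), producing an element of $A^\perf$ mapping to $a$. So the projection $\pi_0: A^\perf \to A$ is surjective, exhibiting $A$ as a quotient of the perfect valuation ring $A^\perf$.
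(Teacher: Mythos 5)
Your overall architecture is sound and in places cleaner than the paper's: showing $A^\perf$ is reduced (rather than an integral domain directly) and invoking Lemma~\ref{lemm:redChainVal}, and the observation that the sets $\{i : a_i \mid b_i\}$ and $\{i : b_i \mid a_i\}$ are both downward-closed in $\NN$ with union $\NN$, so one is all of $\NN$ — this is slicker than the paper's separate ascent and descent. But you stop at the single step that actually carries the weight of the proposition: you write ``the hard part will be to show that $\eta_{i+j}^{p^j} = 0$ for all sufficiently large $j$,'' and then do not show it. That is precisely the point where the chain-ring hypothesis enters and where the paper's proof does its work via Lemma~\ref{lemm:annKer}; as it stands your argument is a correct reduction, not a proof.

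The gap is fillable, along lines the paper essentially already supplies. Handle $\alpha = 0$ trivially ($\beta \mid \alpha$), and otherwise fix $n$ with $a_n \neq 0$, so $a_m \neq 0$ for all $m \geq n$ (since $a_n = a_m^{p^{m-n}}$). For $j \geq 1$ with $i+j \geq n+1$ you have $a_{i+j}\,\eta_{i+j} = 0$ and $a_{i+j}^p = a_{i+j-1} \neq 0$, so Lemma~\ref{lemm:annKer} (with $y = a_{i+j}$, $x = \eta_{i+j}$) gives $\eta_{i+j}^p = 0$, hence $\eta_{i+j}^{p^j} = 0$. Thus $\delta_i = \sum_{j\geq 0}\eta_{i+j}^{p^j}$ is a finite sum. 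You should also record that $\delta_i \in \mathrm{Ann}(a_i)$, needed so that $c_i := c_i^{(0)} + \delta_i$ still satisfies $c_i a_i = b_i$: this follows term by term from $a_i\,\eta_{i+j}^{p^j} = (a_{i+j}\,\eta_{i+j})^{p^j} = 0$, using $a_i = a_{i+j}^{p^j}$. With these two facts the recursion $\delta_i = \eta_i + \delta_{i+1}^p$ is genuinely solved and your construction goes through. Note also that the paper achieves the same correction more economically by simply replacing $c_i$ with $c_{i+2}^{p^2}$; your telescoping-series variant is equivalent after the finiteness is established, but it is not shorter.
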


We isolate the following lemma for readibility.

\begin{lemm} \label{lemm:annKer}
Suppose that $A$ is a chain ring of characteristic $p$, and $xy = 0$. If $y^p \neq 0$, then $y | x$ and $x^p = 0$.
\end{lemm}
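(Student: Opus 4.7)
The plan is to reduce the lemma to a ``divisibility dichotomy'' valid in any chain ring: \emph{if $ab = 0$ and $b \nmid a$, then $b^2 = 0$}. No characteristic information is actually needed; the hypothesis $y^p \neq 0$ will only enter through its consequence $y^2 \neq 0$. To prove the dichotomy, the chain condition and $b \nmid a$ force $a \mid b$, so writing $b = ac$ gives $a^2 c = ab = 0$. Applying the chain property once more to compare $a$ and $c$ splits into two symmetric sub-cases (either $a = cd$ or $c = ad$), and in each a short substitution into $b^2$ combined with $a^2 c = 0$ yields $b^2 = 0$.

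Granted the dichotomy, the lemma follows in two applications. First, apply it to $xy = 0$: since $y^p \neq 0$ implies $y^2 \neq 0$, the dichotomy rules out $y \nmid x$, so $y \mid x$. Write $x = yz$, so that $y \cdot x = y^2 z = xy = 0$. Apply the dichotomy a second time to the product $y \cdot x = 0$: either $x \nmid y$, in which case $x^2 = 0$ and we are done, or $x \mid y$. In the latter sub-case, writing $y = xw$ and combining with $x = yz$ gives $x(1 - wz) = 0$; since every chain ring is local, either $1 - wz$ is a unit (forcing $x = 0$) or $wz$ and hence $w$ is a unit (forcing $x^2 = 0$ after cancelling $w$ in $x^2 w = xy = 0$). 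In every branch $x^2 = 0$, and a fortiori $x^p = 0$ for $p \geq 2$.

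The main obstacle is isolating and proving the auxiliary dichotomy of the first paragraph; once that statement is in hand, the rest of the lemma is formal, with the locality of chain rings (Lemma~\ref{lemm:valuRingDef}) cleanly resolving the only subtle edge case, namely that in which $x$ and $y$ are mutual divisors without being \emph{a priori} unit multiples of each other.
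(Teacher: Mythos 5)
Your argument is correct, but it is considerably heavier than it needs to be, and both layers of case analysis can be eliminated by short computations that the chain condition already hands you.

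First, your auxiliary dichotomy (\emph{if $ab = 0$ and $b \nmid a$, then $b^2 = 0$}) is true, but the comparison of $a$ and $c$ is unnecessary. From $b \nmid a$ the chain condition forces $a \mid b$; write $b = ac$ and observe
\[
b^2 = (ac)^2 = (a^2c)\,c = (ab)\,c = 0,
\]
one line, no sub-cases.

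Second, and more significantly, after your first application of the dichotomy establishes $y \mid x$, everything else you do --- the second application to $yx = 0$, the locality analysis of $x(1-wz)=0$, the unit-cancellation --- is redundant. Once $x = yz$ you have immediately
\[
x^2 = x(yz) = (xy)z = 0,
\]
hence $x^p = 0$ for $p \geq 2$.

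The paper is even terser and does not isolate a dichotomy lemma at all: the chain condition gives $y \mid x$ or $x \mid y$; in the first case $x = yz$ yields $x^p = x^{p-2}(xy)z = 0$; in the second case the symmetric computation gives $y^p = 0$, contradicting the hypothesis, so that case is impossible. Your remark that characteristic $p$ is never actually used (only $p \geq 2$) is accurate, but it applies equally to the paper's proof. In short: your route is valid but a detour; the direct chain-divisibility split already does all the work.
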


\begin{proof}
If $y | x$, then $x = yz$ so $x^p = x^{p-1}yz = x^{p-2}xyz = 0$. If $x|y$, by symmetry $y^p = 0$, a contradiction.
\end{proof}

\begin{proof}
Suppose $a = (\dots, a_2, a_1), b \in A^\perf \subseteq \prod_{\NN} A$ are nonzero elements. That is, we have $a_i, b_i \in A$ and 
\begin{equation} \label{equa:aabb}
a_{i+1}^p = a_i, b_{i+1}^p = b_i
\end{equation}
and 
\begin{equation} \label{equa:anbn}
a_n, b_n \neq 0
\end{equation}
 for some $n$. 
Now if $ab = 0$, then we have $0 = a_{n+1}b_{n+1}$ so by Lemma~\ref{lemm:annKer}, since $0 \neq a_n = a_{n+1}^p$ we must have $b_n = b_{n+1}^p = 0$, contradicting the assumption that $b_n \neq 0$. So $A^\perf$ is an integral domain. 

Next, we show that $A^\perf$ is a chain ring. As above, let $a, b$ be two nonzero elements, such that $a_n, b_n$ are nonzero. Since $A$ is a chain ring, we have either $b_n|a_n$ or $a_n|b_n$. Suppose $b_n|a_n$. We will show that $b|a$ by constructing a $c$ with $a = bc$. 

First, note that we have $b_i | a_i$ for all $i \leq n$ because $b_i = b_{i+1}^p$ and $a_i = a_{i+1}^p$. In the other direction we must also have $b_{n+1}|a_{n+1}$. Indeed, if $b_{n+1} = ca_{n+1}$ for some non-unit $c$, then $b_n = c^pa_n = c^pb_nc_n$ for some $c_n$ so $(1-c^pc_n)b_n = 0$. By assumption $c$ is not a unit, so $c^pc_n$ is not a unit so because $A$ is local, $(1-c^pc_n)$ is a unit, so $b_n = 0$, a contradiction. 

So by induction $b_i | a_i$ for all $i \in \NN$. For each $i$, choose $c_i \in A_i$ such that 
\begin{equation} \label{equa:abc}
a_i = b_ic_i.
\end{equation}
We do not necessarily have $c_{i+1}^p = c_i$, but we claim that upon replacing $c_i$ with $c_i' := c_{i+2}^{p^2}$ for all $i \geq n$, we have both 
\[ a_i = b_ic_i' \quad \textrm{ and } \quad (c_{i+1}')^p = c_i' \]
for all $i \geq n$, and therefore $a = bc$, or in other words, $b | a$ (of course for $i < n$ we set $c_i' = (c_n')^{p^i}$ ).

To begin with, since 
$b_{i}c_{i+1}^p 
\stackrel{\eqref{equa:aabb}}{=} 
(b_{i+1}c_{i+1})^p 
\stackrel{\eqref{equa:abc}}{=} 
a_{i+1}^p 
\stackrel{\eqref{equa:aabb}}{=} 
a_{i} 
\stackrel{\eqref{equa:abc}}{=} 
b_{i}c_{i}$, we see that $b_i(c_{i+1}^p - c_{i}) = 0$ for all $i$. On the other hand, if $b_{i} \neq 0$, then $b_{i+1}^p \neq 0$ and by Lemma~\ref{lemm:annKer} $(c_{i+2}^p - c_{i+1})^p = 0$. In other words, $b_i \neq 0$ implies 
\begin{equation} \label{equa:cp2}
c_{i+2}^{p^2} = c_{i+1}^p.
\end{equation}
At the beginning we assumed $b_n \neq 0$, so we have \eqref{equa:cp2} for all $i \geq n$.

Set $c_i' := c_{i+2}^{p^2}$ for $i \geq n$. Then for $i \geq n$ we have 
\[ 
a_i 
\stackrel{\eqref{equa:aabb}}{=} 
a_{i+2}^{p^2}
\stackrel{\eqref{equa:abc}}{=} 
b_{i+2}^{p^2}c_{i+2}^{p^2}
\stackrel{def}{=}
b_{i+2}^{p^2}c_i'
\stackrel{\eqref{equa:aabb}}{=} 
b_ic_i'
\]
and 
\[
(c_{i+1}')^p 
\stackrel{def}{=}
(c_{i+3}^{p^2})^p
\stackrel{\eqref{equa:cp2}}{=}
(c_{i+2}^p)^{p}
\stackrel{def}{=}
c_i'
\]
as claimed.
\end{proof}

%

\bibliographystyle{alpha}
\bibliography{bib.bib}


%
%
%
%
%

\end{document}